\numberwithin{equation}{section}
\newtheorem{theorem}{Theorem}
\newtheorem{lemma}[theorem]{Lemma}
\newtheorem{definition}[theorem]{Definition}
\newtheorem{openquestion}[theorem]{Open question}
\newcommand{\density}{\alpha}
\newcommand{\bb}[1]{\mathrm{#1}}
\newcommand{\Image}{\bb{Im}}
\newcommand{\dom}{\bb{dom}}
\newcommand{\DB}{\texttt{DB}}
\newcommand{\IF}{\texttt{IF}}
\newcommand{\SB}{\texttt{SB}}
\newcommand{\n}{n}
\newcommand{\cars}{C}
\newcommand{\goats}{G}
\newcommand{\subblock}{\sqsubseteq}
\newcommand{\oc}{\texttt{sparseSB0}}
\newcommand{\outcome}{\texttt{sparseSB}}
\newcommand{\sparse}{\texttt{sparse}}
\newcommand{\floor}[1]{\lfloor #1 \rfloor}
\newcommand{\ceil}[1]{\lceil #1 \rceil}
\newcommand{\NULL}{\texttt{NULL}}
\newcommand{\A}{A}
\newcommand{\f}{f}
\newcommand{\g}{g}
\newcommand{\F}{F}
\newcommand{\B}{B}
\newcommand{\G}{G}
\newcommand{\Pos}{P}
\newcommand{\h}{h}
\newcommand{\prin}{p}
\newcommand{\T}{\bb{T}}
\newcommand{\con}{\texttt{constructDB}}
\newcommand{\cgone}{\texttt{updateCG1}}
\newcommand{\cgr}{\texttt{updateCG}}
\newcommand{\cg}{\texttt{updateCGadaptive}}
\title{Comparing disorder and adaptability in stochasticity\thanks{Liling Ko was partially supported by Caroline Terry's NSF grant DMS-2115518.}\thanks{Collaborations were supported by the Mathematics Research Institute of The Ohio State University Mathematics Department and the Dartmouth Shapiro Visitors Fund.}}
\author[1]{Liling Ko}
\author[2]{Justin Miller}
\affil[1]{lko.390@osu.edu, Ohio State University}
\affil[2]{justin.d.miller@dartmouth.edu, Dartmouth College}
\begin{document}

\maketitle

\textbf{Abstract:} In the literature, there are various notions of stochasticity which measure how well an algorithmically random set satisfies the law of large numbers. Such notions can be categorized by disorder and adaptability: adaptive strategies may use information observed about the set when deciding how to act, and disorderly strategies may act out of order. In the disorderly setting, adaptive strategies are more powerful than non-adaptive ones. In the adaptive setting, Merkle \cite{merkle_2003} showed that disorderly strategies are more powerful than orderly ones. This leaves open the question of how disorderly, non-adaptive strategies compare to orderly, adaptive strategies, as well as how both relate to orderly, non-adaptive strategies. In this paper, we show that orderly, adaptive strategies and disorderly, non-adaptive strategies are both strictly more powerful than orderly, non-adaptive strategies. Using the techniques developed to prove this, we also make progress towards the former question by introducing a notion of orderly, ``weakly adaptable'' strategies which we prove is incomparable with disorderly, non-adaptive strategies. This is the first known demonstration of the fact that the considered stochasticity notions do not form a chain.

\section{Introduction}

\subsection{The landscape of stochasticity}

Stochasticity can be thought of as a version of the Monty Hall problem played in an infinite and computable setting: in this game, countably many doors $d\in\omega$ are arranged in a row. Behind each door $d$, the host $\A\subseteq\omega$ hides a car ($\A(d)=1$) or a goat ($\A(d)=0$). Infinitely many of the doors must hide cars ($|\A|=\infty$). The host claims that the assignment has been done independently for each door with probability $\density$ of the door hiding a car and probability $1-\density$ of the door hiding a goat. Contestants $f$ are tasked with demonstrating the host wrong by exhibiting a computable strategy which selects an infinite subsequence of doors in which the probability of a door hiding a car is not $\density$, which is possible almost never for an assignment of cars and goats which is truly independently random with probability $\density$. Changing the ruleset the contestants must follow changes the stochasticity notion.
\\
\par
Previously studied notions of stochasticity include von Mises-Wald-Church (MWC) stochasticity, Church stochasticity, Kolmogorov-Loveland (KL) stochasticity, and injection stochasticity (which is equivalent to the notion of intrinsic density introduced by Astor \cite{intrinsicdensity}).
\\
\par
In Church stochasticity, the contestants may use any total computable strategy which chooses a subsequence of doors by opening every door in a row according to the original ordering. Before opening the next door, the contestant is allowed to take or leave the object behind it. This decision is made (computably) with knowledge of what was behind the previously opened doors. If the host uses $A$ to assign goats and cars and challenges the contestants to disprove that they did so randomly with probability $\density$, then $A$ is said to be $\density$-Church stochastic if the host wins against all such strategies. Church stochasticity is \emph{orderly} because contestants must open all doors in order, and is \emph{adaptive} because contestants are permitted to use the information gained from previously opened doors in deciding whether to take the object behind the next one. 
\\
\par
The only modification between Church stochasticity and MWC stochasticity is that the contestants may use partial computable strategies rather than just total computable strategies. This notion is also orderly and adaptive, and at first glance appears quite similar to Church stochasticity. However, Ambos-Spies \cite{Ambosspies} showed that there are computable randoms which are not MWC stochastic, which proves that these notions do not coincide since every computable random is Church stochastic. (See Downey-Hirschfeldt \cite{Downey2010} Chapter 7.4 for a review of Church and MWC stochasticity.)
\\
\par
In KL stochasticity, contestants no longer need to respect the original ordering. They may visit doors out of order, and may still make decisions using information obtained from doors previously opened. This notion is disorderly and adaptive. Merkle \cite{merkle_2003} proved that KL stochasticity is strictly stronger than MWC stochasticity. (See also Merkle et al. \cite{merkle2006kolmogorov}.)
\\
\par
Astor \cite{intrinsicdensity} originally developed the notion of intrinsic density in the context of approximate computability, but proved that it is equivalent to the notion of injection stochasticity: here contestants are allowed to select doors out of order, but may not open any of them. They simply choose a comptable selection process to select their subsequence, and then all doors are opened simultaneously. This notion is disorderly and non-adaptive. If $A$ and $B$ are both $\density$-injection stochastic, then so is $A\oplus B$ \cite{thesis}, which immediately shows that injection stochasticity implies none of the previously mentioned stochasticity notions: with the ability to open doors without taking the hidden object, we can open even doors and only open odd doors if the object hidden behind the corresponding even door is a car.
\\
\par
It remains open whether MWC stochasticity implies injection stochasticity/ intrinsic density. Also, note that the above list does not include an orderly, non-adaptive notion of stochasticity. We shall give a formal definition of an orderly, non-adaptive notion of stochasticity and prove that it is strictly weaker than all of the above notions. The techniques from this proof shall allow us to make progress on the former question by introducing a new orderly, adaptive notion of stochasticity which is incomparable to intrinsic density.

\subsection{Definitions}

\begin{definition}
    Let $A\subseteq\omega$.
    \begin{itemize}
        \item The density of $A$ at $n$ is $\rho_n(A)=\frac{|A\upharpoonright n|}{n}$, where $A\upharpoonright n=A\cap\{0,1,\dots,n-1\}$.  
        \item The upper density of $A$ is $\overline{\rho}(A)=\limsup_{n\to\infty} \rho_n(A)$. 
        \item The lower density of $A$ is $\underline{\rho}(A)=\liminf_{n\to\infty} \rho_n(A)$. 
        \item If $\overline{\rho}(A)=\underline{\rho}(A)=\alpha$, we call $\alpha$ the (asymptotic) \emph{density} of $A$ and denote it by $\rho(A)$.
    \end{itemize}
\end{definition}

Asymptotic density is the main tool for analyzing whether something ``satisfies'' the law of large numbers. For example, if the set $A$ is generated by countably many independent $p$-Bernoulli random variables to determine if $n\in A$ for each $n$, then the law of large numbers says that $\rho(A)=p$ with probability $1$. Stochasticity notions require that subsequences selected according to specific rules also have density $p$.

\begin{definition}
    Let $\A\subseteq\omega$.
    \begin{itemize}
        \item The computable upper density of $\A$ is
        \[\overline{R}(\A) :=\sup\{\overline{\rho}(f^{-1}(A)):\; f\text{ total, computable, and increasing}\}.\]
        Computable lower density $\underline{R}(A)$ is defined analogously with $\inf$.
        \item The \emph{computable density} of $\A$ is
        \[R(\A) :=\underline{R}(\A), \text{ if } \overline{R}(\A)=\underline{R}(\A).\]
        $A$ is computably small if $R(A)=0$.
    \end{itemize}
\end{definition}

Alternatively, this is known as \emph{increasing stochasticity}. Replacing the increasing requirement by merely injective yields injection stochasticity, also known as intrinsic density. It is then immediate that $A$ having intrinsic density $\density$ implies $R(A)=\density$. Using the notation and techniques from \cite{thesis} and \cite{stochasticity}, it is straightforward to prove that this behaves the same under joins, {\tt into}, and {\tt within} as intrinsic density.

\begin{definition}
Let $A\subseteq\omega$.
\begin{itemize}
    \item A skip sequence $\sigma$ is an element of $(\omega\times 2)^{<\omega}$ such that the first coordinates of $\sigma$ are strictly increasing. $\sigma(k)_0$ denotes the first coordinate of the $k$-th entry in $\sigma$, and $\sigma(k)_1$ denotes the second coordinate. Let $S$ be the set of all skip sequences. 
    \item A skip rule is a total function $f:S\to\omega$ such that $f(\sigma)>\sigma(|\sigma|-1)_0$ for all $\sigma\in S$.
    \item The characteristic function of $f(A)$ is defined recursively via:
    \begin{itemize}
        \item $\chi_{f(A)}(0)=A(f(\emptyset))$
        \item $\chi_{f(A)}(n+1)=A(f(f(A)\upharpoonright n+1))$
    \end{itemize}
    \item $A$ is $\density$-weakly stochastic if 
    \[\rho(f(A))=r\]
    for all computable skip rules $f$.
\end{itemize}
\end{definition}

The first coordinate in each ordered pair of a skip sequence represents the door selected, and the second coordinate represents what was behind the door. Skip rules are functions which can use a skip sequence (i.e. all previously observed information) to decide how to proceed.
\\
\par
We shall prove that weak stochasticity is incomparable with intrinsic density, and that both imply but are not equivalent to computable density. We shall also show that weak stochasticity is not equivalent to Church or MWC stochasticity despite being a notion which is orderly and adaptive. This is the first known proof that the considered stochasticity notions do not form a chain. This will give us the following ``zoo'' (Figure \ref{fig:zoo}) of stochasticity notions, which combines our work with the existing landscape described above:

\begin{figure}[H]
    \centering
    \includegraphics[width=0.55\textwidth]{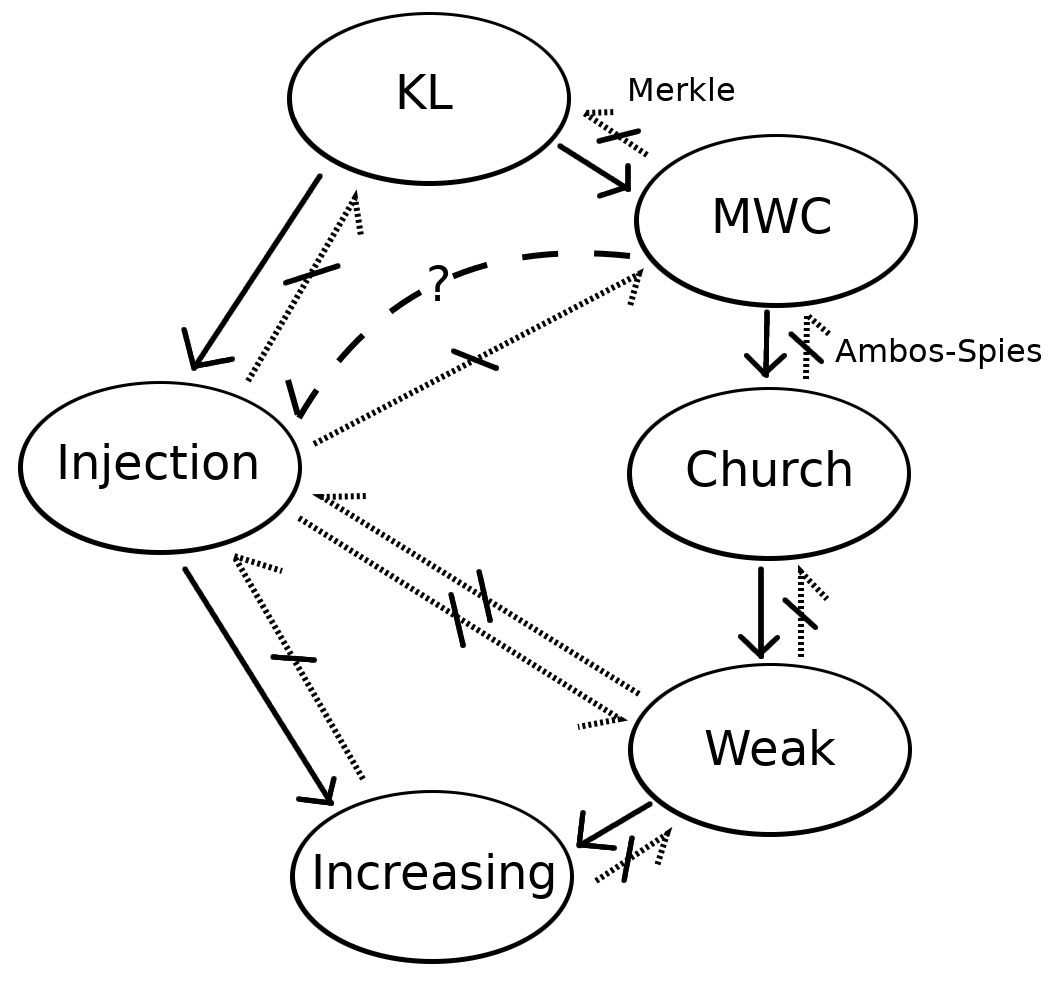}
    \caption{The dependencies between various notions of stochasticity are summarized here. The solid lines are implications which are immediate from definitions. The dotted lines are non-implications which have been proven, and the dashed line is open. Church's relationship to Injection is in the same state as MWC: one direction is known, the other is open. The referenced citations are \cite{Ambosspies} and \cite{merkle_2003}. All uncited non-arrows are either found in this paper or immediate from the work in \cite{thesis}.}
    \label{fig:zoo}
\end{figure}

\section{Computable density does not imply intrinsic density} \label{sec:disorder-stronger-order}

We shall first prove that computable density does not imply intrinsic density. We do so for density $0$ in Theorem~\ref{thm:disorder-stronger-order}, but we shall extend this to the entire unit interval in Subsection \ref{sec:alpha}.

\begin{theorem} \label{thm:disorder-stronger-order}
    A host $\A\subset\omega$ that beats all orderly contestants $\f\leq_{\mathrm{T}}\emptyset$ may not beat all disorderly ones $\h\leq_{\mathrm{T}}\emptyset$. That is, there is a set $A$ which is computably small but not intrinsically small. Furthermore, the constructed $A$ is not Church stochastic.
\end{theorem}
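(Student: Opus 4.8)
The plan is to construct a set $A$ that is computably small (density $0$ along every increasing computable selection) but not intrinsically small (some injective computable selection recovers positive density). The intuition is that an increasing selection function $f$ cannot skip backwards, so if we hide the "dense" portion of $A$ inside blocks and arrange those blocks so that any increasing contestant who wants to hit the dense spots is forced to also open a vastly larger number of sparse doors first, we can dilute the observed density to $0$. Meanwhile a disorderly (merely injective) contestant can jump directly to the dense doors, ignoring the padding, and witness positive density.

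\medskip

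Concretely, I would partition $\omega$ into consecutive blocks $B_0, B_1, B_2, \dots$ of rapidly growing length, say $|B_k|$ growing fast enough (e.g.\ $|B_k| = 2^{2^k}$ or faster) that each block dwarfs the sum of all preceding blocks. Within each block $B_k$ I designate a small designated ``car region'' $C_k \subseteq B_k$ of size roughly $|B_k|/k$ (or any sequence whose ratio to $|B_k|$ tends to $0$ but which stays infinite in absolute terms), and set $A \cap B_k = C_k$, with the cars placed at a computably fixed, easily locatable position inside each block --- for instance the first $|C_k|$ elements of $B_k$. This guarantees $|A| = \infty$ as required. The first step is to verify $R(A)=0$: for an arbitrary increasing computable $f$, I bound $\overline{\rho}(f^{-1}(A))$. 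Because $f$ is increasing, the preimage counts $|f^{-1}(A) \upharpoonright n|$ are controlled by how many of the first $n$ values of $f$ land in the car regions; since the car regions have vanishing relative density inside each block and the blocks grow geometrically, a standard prefix-averaging argument shows the running density of $A$ at block boundaries tends to $0$, and because $f$ is increasing its output density is squeezed by $A$'s own density profile. The main calculation is showing that the ``worst case'' block boundaries still force the $\limsup$ of $\rho_n(f(A))$ to $0$, which follows from the geometric growth making each new block reset the average.

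\medskip

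The second step is to exhibit the witnessing disorderly (injective) selection $h$ showing $A$ is not intrinsically small. Here $h$ simply enumerates the car regions $C_k$ directly, skipping all the padding: since the location of $C_k$ inside $B_k$ is computable, $h$ can list exactly the designated car positions, so $h^{-1}(A)$ has density $1$ (every selected door hides a car). More carefully, to get a clean intrinsic-density statement I would have $h$ select the car positions interleaved with just enough non-car positions to land on a target density bounded away from $0$, but the cleanest version selects only cars and obtains density $1 \neq 0$, contradicting intrinsic smallness. This direction is routine once the block structure is fixed.

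\medskip

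The hard part will be the first step: proving $R(A)=0$ against \emph{all} increasing computable $f$ simultaneously, not just the identity. An increasing $f$ still has freedom to concentrate its outputs --- it could stall in a car-dense prefix of a block before moving on --- so I must ensure that no increasing reordering can exploit local density. The key obstacle is that $\overline{\rho}$ is a $\limsup$, so even rare windows of high observed density matter; I expect to handle this by making the car region a genuinely vanishing fraction \emph{and} placing it so that any prefix of $f$'s range containing many cars must, by monotonicity, already have passed through proportionally many goats. The geometric block growth is the lever that makes the $\limsup$ collapse: because $|B_{k}|$ exceeds the total of all earlier blocks, the density measured at any point is dominated by the current block's internal density, which I control. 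Verifying this domination uniformly over all increasing $f$ is the crux of the argument.
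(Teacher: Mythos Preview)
Your construction has a fatal gap: as described, the set $A$ is \emph{computable}. The car regions $C_k$ are the first $|C_k|$ elements of the computably specified block $B_k$, so $A=\bigcup_k C_k$ is a computable infinite set. But then its principal function $p_A$ (the increasing enumeration of $A$) is itself a total computable increasing function, and $p_A^{-1}(A)=\omega$ has density $1$. Hence $\overline{R}(A)=1$, and $A$ is not computably small. More generally, any infinite computable set fails to be computably small for exactly this reason, so no fixed, contestant-independent placement of cars can work.

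The sentence ``because $f$ is increasing its output density is squeezed by $A$'s own density profile'' is where the reasoning goes wrong: the density of $f^{-1}(A)$ is \emph{not} bounded by $\rho(A)$. An increasing $f$ is free to land only on the car regions and skip all the goats; monotonicity does not force $f$ to pass through the padding at all. Your block geometry controls $\rho(A)$, but computable density is a strictly stronger requirement.

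The paper's construction is necessarily adaptive: $A$ is built using $\emptyset'$ as an oracle, and the placement of cars inside each block \emph{depends on what the increasing computable opponents $f_e$ do there}. The disorderly witness $h$ is a fixed computable bijection built from nested ``disordered blocks'' (an increasing spine with gap-filling sub-blocks), but $A$ is filled in block by block by checking, via $\emptyset'$, whether each $f_e$ is ``dense'' or ``sparse'' in that block, and placing cars either on the spine or inside a sub-block that $f_e$ misses accordingly. This diagonalization against each $f_e$ is the missing idea; without it you cannot simultaneously defeat all increasing computable selections.
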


Intuitively, this theorem shows that in the non-adaptive setting, disorderly strategies outperform orderly strategies. Our opponents are the orderly contestants
\begin{align}
    \f:\; &\omega\to\omega,\\
    \forall(t) &[\f(t+1)>\f(t)].
\end{align}
Contestant $\f$ can be thought of as a map from time-stamp $t$ to door index $\f(t)$. We can exhaust our computable opponents by fixing an effective enumeration of the partial computable increasing functions:
\[\f_0,\f_1,\ldots.\]

To construct host $\A$ witnessing the Theorem~\ref{thm:disorder-stronger-order}, we will use $\emptyset'$ as an oracle:
\begin{lemma} \label{lemma:below-jump}
    We can ensure in Theorem~\ref{thm:disorder-stronger-order} that $\A\leq\emptyset'$.
\end{lemma}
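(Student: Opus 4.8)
The plan is to construct $\A$ together with a single fixed computable injection $\h$ so that $\overline{\rho}(\h^{-1}(\A))$ is bounded away from $0$ (witnessing that $\A$ is not intrinsically small) while every total computable increasing $\f_e$ satisfies $\overline{\rho}(\f_e^{-1}(\A))=0$ (witnessing, with $\underline{R}(\A)\ge 0$ forced by $\overline{R}(\A)=0$, that $\A$ is computably small). I partition the doors into consecutive batches $R_0,R_1,\dots$ of rapidly increasing lengths $w_i$, chosen so that $w_i$ dominates $i^2\sum_{k<i}w_k$; this growth is the engine that lets the cars of one batch outweigh the entire earlier history of any selection. The injection $\h$ is fixed in advance: during the time-interval $[T_i,T_{i+1})$ with $T_i=\sum_{k<i}w_k$ it enumerates exactly the doors of $R_i$, but through an internal permutation $\pi_i$ chosen to be \emph{spreading}, so that any door-order-contiguous block of doors is scattered across $\h$-time rather than clustered at its front. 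The point of $\h$ acting out of order is that I may hide cars among $\h$'s earliest visits in each batch, forcing $\rho_n(\h^{-1}(\A))$ above a fixed constant infinitely often, whereas an increasing contestant can never visit those same cars without first paying for the goats that precede them in door-order.

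The construction is a single left-to-right pass performed relative to $\emptyset'$; batch $R_i$ attends to the finitely many contestants $\f_0,\dots,\f_i$. For each $e\le i$ I compute $\f_e(0),\f_e(1),\dots$ until the value exceeds $\max R_i$, using $\emptyset'$ to decide the $\Sigma^0_1$ question of whether each $\f_e(s)$ converges; this yields the finite set of doors $\f_e$ places in $R_i$ (and if some $\f_e(s)\uparrow$, then $\f_e$ is not total, $R_e$ is vacuous, and $\f_e$ contributes no further doors). Call $\f_e$ \emph{small in $R_i$} if it selects at most $\theta_i:=w_i/(2i)$ doors there and \emph{big} otherwise. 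I then hide $c_i:=\sum_{k<i}w_k$ cars at the first $c_i$ positions of $\pi_i$ that avoid every door chosen by a small contestant, and fill the rest of $R_i$ with goats. Because the small contestants together select at most $i\theta_i=w_i/2$ doors and $\pi_i$ spreads each contestant's selection uniformly across $\h$-time, the front window $[0,2c_i)$ of $\pi_i$ still contains $c_i$ free positions, so the cars sit among $\h$'s first $2c_i$ visits in the batch. Batches are disjoint and never revisited, so there is no injury and no restraint bookkeeping.

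Three checks then finish the argument. For $\h$: since $c_i\ge T_i$ and the cars occupy $\h$-positions within $[T_i,T_i+2c_i)$, at the moment $\h$ clears this window at least $c_i\ge T_i$ of the $T_i+2c_i$ elapsed steps are cars, giving $\overline{\rho}(\h^{-1}(\A))\ge 1/3$. For each $R_e$: once $i\ge e$ a small $\f_e$ is dodged and collects no car from $R_i$, while a big $\f_e$ selects more than $\theta_i\gg c_i$ doors there and so sees within-batch car-density at most $c_i/\theta_i\to 0$; combined with the estimate $\sum_{k\le i}c_k\approx w_{i-1}\le w_i/i^2$ this drives the cumulative density to $0$, and the finitely many batches with $i<e$ contribute a bounded number of hits that wash out in the $\limsup$ (the spreading $\pi_i$ keeps cars a uniform small fraction in door-order, so even partial prefixes stay controlled). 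Infinitude of $\A$ is immediate. Finally, the oracle bound: the only non-computable data consumed at batch $i$ is the $\Sigma^0_1$ convergence information for $\f_0,\dots,\f_i$ on finitely many inputs, each decided by $\emptyset'$, so $\A\le_{\T}\emptyset'$.

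The main obstacle is the balancing hidden in the dodge step, and it is also exactly what keeps the oracle at $\emptyset'$ rather than $\emptyset''$. The cars must be simultaneously numerous and $\h$-early enough to keep $\overline{\rho}(\h^{-1}(\A))$ positive, yet $\f_e$-avoiding enough to keep every increasing contestant at density $0$; reconciling the two forces the super-fast growth of $w_i$, the threshold $\theta_i$ separating dodged from auto-diluted contestants, and the spreading property of $\pi_i$, and the delicate point is verifying that the front window of $\pi_i$ always retains $c_i$ free positions after the small contestants are removed. Crucially, the construction never needs to know whether any $\f_e$ is \emph{total} — a $\Pi^0_2$ fact that would push it up to $\emptyset''$ — because a partial $\f_e$ simply selects fewer doors and its requirement becomes vacuous; all that is ever asked is convergence on individual inputs, which is why $\emptyset'$ suffices.
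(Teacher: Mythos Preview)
Your proposal identifies the right oracle-theoretic point --- only finitely many convergence facts are consulted at each stage, so $\emptyset'$ suffices --- but the construction itself has a real gap that breaks the $G_s$-side of the argument.

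The trouble is the ``spreading'' permutation $\pi_i$. You invoke it in two incompatible ways. First you ask that door-order-contiguous intervals be scattered across $h$-time, which a fixed computable bijection (say bit-reversal) can certainly achieve. But then, to argue that the front window $[0,2c_i)$ of $\pi_i$ still has $c_i$ free positions after deleting the small contestants' doors, you need that \emph{each contestant's selection} is spread uniformly across $h$-time --- and no fixed $\pi_i$ can guarantee this against an arbitrary computable increasing $f$. Concretely: since $\pi_i$ and $\theta_i$ are computable, the function that in batch $i$ picks exactly the doors $\{\pi_i(t):t<\theta_i\}$ (listed in increasing door order) is total, computable, and increasing; it appears as some $f_j$ in the enumeration. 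For every $i\ge j$ this $f_j$ selects exactly $\theta_i$ doors and is therefore small, so you must dodge it --- but its doors occupy the entire front window and far beyond. The cars are then forced to $h$-times $\ge\theta_i$, and the density of $h^{-1}(A)$ at the end of that run is at most on the order of $c_i/\theta_i<2/i\to 0$. So $\overline{\rho}(h^{-1}(A))=0$ and $A$ is intrinsically small after all. A parallel problem afflicts your ``partial prefixes stay controlled'' claim for big contestants: a big $f$ can front-load its selections on the (computable) set $\{\pi_i(t):t<2c_i\}$ before padding with $\theta_i$ further doors, and nothing in your setup prevents density $1$ at that prefix.

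The paper's construction avoids exactly this obstruction by replacing your single spreading permutation with a \emph{nested} disordered-block structure. Each block $\DB$ begins with an increasing function $\IF$ whose range is one door per sub-block, followed by sub-blocks $\SB$ filling the gaps; and each sub-block recursively has the same shape, to depth $s$ at stage $s$. The dichotomy is not ``$\le\theta_i$ doors in total'' versus ``$>\theta_i$ doors in total'', but rather ``$<s$ doors in \emph{some} sub-block'' (sparse) versus ``$\ge s$ doors in \emph{every} sub-block'' (dense). If some $f$ is sparse at $\SB$, one discards everything outside $\SB$ and recurses into $\SB$ with one fewer opponent; if all remaining opponents are dense, cars go into the current $\IF$, and denseness guarantees every opponent collects $\ge s$ goats between consecutive cars --- so prefix densities are controlled automatically, with no spreading hypothesis needed. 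The largeness condition on $\IF$ at every depth ensures $h$ still sees enough cars after at most $s$ recursions. This recursive zoom-in is the missing idea in your plan: it is what lets the construction respond to a contestant that targets the early $h$-times, by simply moving the car-bearing ``front'' into a smaller sub-block where that contestant is absent.
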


Besides $\A$, we also need to construct a disorderly contestant $\h:\omega\to\omega$, again a map from time-stamp to door index. To ensure $\h$ receives enough cars, we shall satisfy for all $s\in\omega$ the global requirement
\begin{align} \tag{$\G_s$} \label{eq:G}
    \G_s:&\; (\exists n>s)\; \left[\left|[0,n(s+1)) \cap \h^{-1}(\A)\right|\geq ns\right],
\end{align}
which says that during the time interval $[0,n(s+1))$, $\h$ receives cars from $\A$ at least $ns$ many times. Therefore, the proportion of times that $\h$ receives cars is
\[\rho_{n(s+1)}(h^{-1}(A\restriction n(s+1))) \geq \frac{ns}{n(s+1)}=1-\frac{1}{s+1}.\]
Since $s$ can be arbitrarily large, we get
\[\bar{\rho}(h^{-1}(A)) = 1>0,\]
as required for $\A$ to be not intrinsically small.

To ensure opponent $\f=\f_e$ rarely receives cars, $\A$ must also be constructed to satisfy positive requirements
\begin{align} \tag{$\Pos_\f$} \label{eq:Pf}
    \Pos_\f: \f \text{ is total } \Rightarrow (\forall s)\; \Pos_{\f,s},
\end{align}
where $\Pos_{\f,s}$ is the sub-requirement
\begin{align} \tag{$\Pos_{\f,s}$} \label{eq:Pfs}
    \Pos_{\f,s}: (\exists t')(\forall t>t') \left\{|[\A(t),\A(t+1)]\cap \Image(\f)| \geq s\right\}.
\end{align}
We often drop subscripts when context is clear. If $\f$ is total, $\Pos_{\f,s}$ says that eventually, which is after time-stamp $t'$, when $\A$ is looking for a new door $\A(t+1)$ to store their next car, they will avoid at least the next $s$ many doors opened by $\f$. Since $\f$ never returns to smaller doors, by waiting for $\f$ to pick enough doors exceeding $\A(t)$, $\Pos_{\f,s}$ will be satisfied, and hence
\[\bar{\rho}(A\cap\Image(f)) \leq \frac{1}{s}.\]
If all $\Pos_{\f,s}$ are satisfied, $\bar{\rho}(\A\cap\Image(\f))=0$, as required for $\A$ to be computably small.

\subsection{Beating a single $\f$ using disordered blocks ($\DB$s)} \label{sec:1}
Consider a basic construction module where $\A$ and $\h$ only need to beat one opponent $\f$.\\

A natural attempt is to let $\h$ get to large doors quickly, or in other words, for $\h$ to be a fast growing function at first. We refer to this behaviour as $\h$ opening doors \emph{sparsely}. The hope is for $\f$ to be less quick, so that $f$ opens many doors within every \emph{gap} interval $(\h(t),\h(t+1))$ skipped by $\h$. We refer to such an outcome as $\f$ opening doors \emph{densely}, as sketched on the left of Figure~\ref{fig:outcomes-1}. Host $\A$'s \emph{strategy} against this outcome is to put the cars only at $\h$'s sparsely spaced doors. With sufficient $\f$-density, opponent $\f$ will open more than $s$ many goat-filled doors at the gaps for every possible car received at each sparsely spaced door, thereby satisfying $\Pos_{\f,s}$. If $\h$ continues opening doors sparsely for long enough, the length of time over which it receives cars will be sufficient to satisfy $\G_s$.\\

But what if $\f$ opens doors sparsely, just like in $\h$'s initial behaviour? We say that $\f$ has a \emph{sparse outcome}. By copying $\h$, contestant $\f$ will receive the same cars as $\h$ if we kept to the earlier strategy, which is too many cars for $\Pos_{\f,s}$. This is where $\h$'s advantage of being disorderly comes in: We pause $\h$'s increasing behaviour and force $\h$ to return to the gaps from the doors skipped earlier; $\f$ can no longer copy this behaviour. $\A$'s strategy will be to place cars only at a gap that $\f$ is sparse within. Refer to the right of Figure~\ref{fig:outcomes-1} for a sketch. By making the gaps wide, the time-interval within which $\h$ receives cars can be long enough to satisfy $\G_s$. But $\f$ might also receive cars at this gap. To ensure $\Pos_{\f,s}$ is satisfied, we remove all the cars from the doors opened by $\f$. The number removed is bounded from the sparseness of $\f$, allowing $\G_s$ to remain satisfied.\\

Thus, $\h$ alternates between being a fast growing function for a brief moment, then a gap-filling one for extended periods. We contain this disorderly behaviour into what we call a \emph{disordered block} ($\DB$). Abusing some notation, $\h$ is the concatenation of these blocks (Figure~\ref{fig:h}):
\begin{align} \label{eq:concat-h}
    \h=\DB_0 \;^\frown \DB_1 \;^\frown \ldots,
\end{align}
where $\DB_s$ refers to the $s$-th disordered block. Each $\DB$ is a bijection from a time-interval to a door-interval that continues from the previous block:
\begin{align*}
    \DB &:[a,a'] \to [a,a'] \text{ is a bijection from time-stamp to door index},\\
    \DB &\text{ often also refers to } \bb{Im}(\DB), \text{ its set of doors},\\
    \floor{\DB} &:= \min(\DB), \text{ the first door},\\
    \ceil{\DB} &:= \max(\DB), \text{ the last door},\\
    |\DB| &:=|\bb{Im}(\DB)|, \text{ the number of doors},\\
    \DB_0 &:\emptyset\to\emptyset,\\
    \floor{\DB_{s+1}} &=\ceil{\DB_s}+1.
\end{align*}

\begin{figure}[h]
    \centering
    \includegraphics[width=0.3\textwidth]{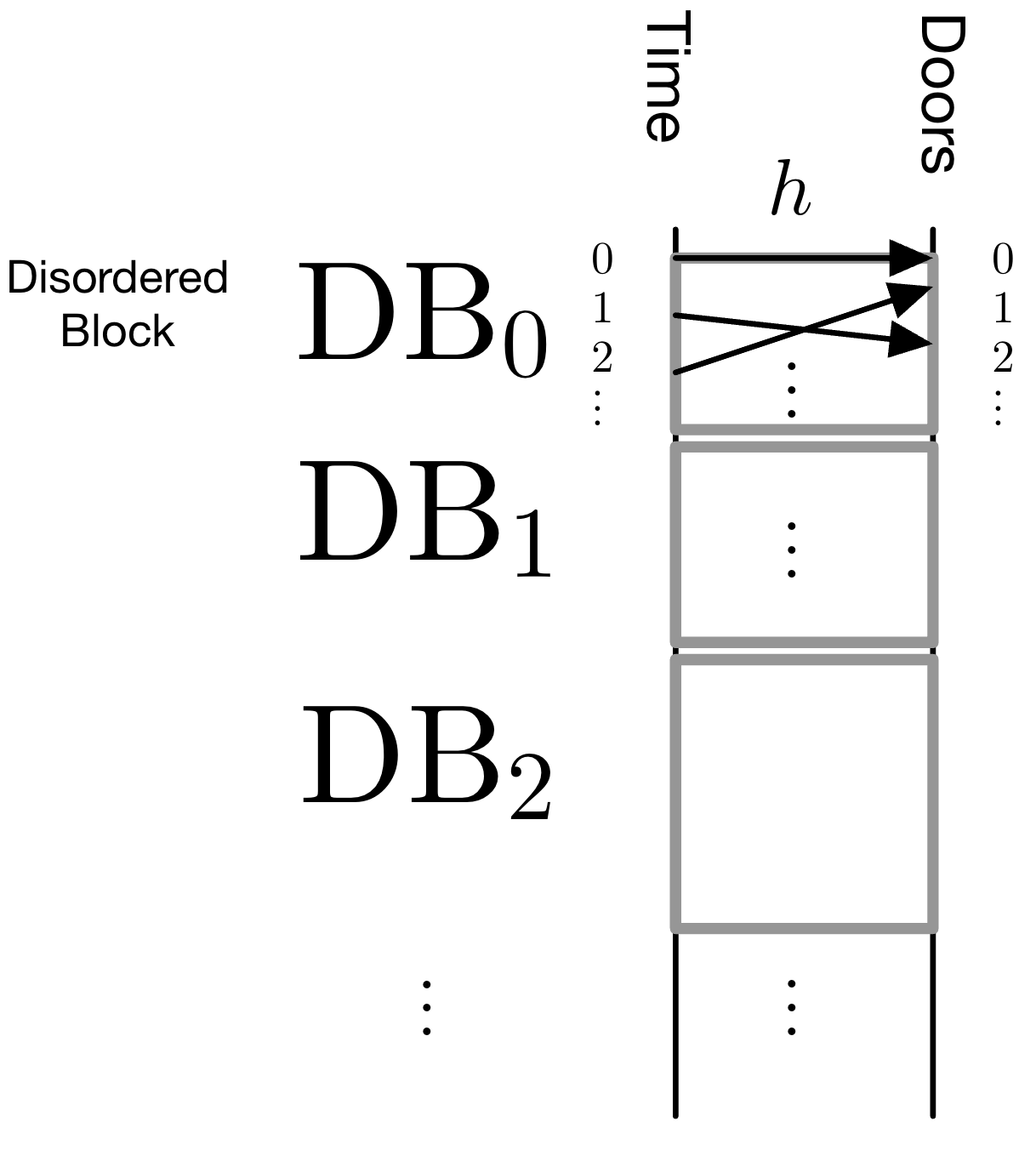}
    \caption{$\h$ is constructed as a concatenation of \emph{disordered blocks} $\DB$, each of which is a bijection viewed as a map from time-stamp to door index.}
    \label{fig:h}
\end{figure}

\subsubsection{$\DB$ structure} \label{sec:DB}
From the overview, each $\DB$ is a bijection from time to doors that begins as an increasing function, then becomes a sequence of functions that fill the gaps between the increasing doors (Figure~\ref{fig:DB}). We refer to the increasing part as the \emph{increasing function $\IF$} of $\DB$, written as $\IF\ll\DB$:
\begin{align*}
    \IF\ll\DB &\text{ is an increasing function from time-stamp to door index,}\\
    \IF &\text{ often also refers to } \bb{Im}(\IF), \text{ its set of doors},\\
    \bb{dom}(\IF) &\preceq \bb{dom}(\DB),\\
    \bb{Im}(\IF) &\subseteq \bb{Im}(\DB),\\
    \floor{\IF} &:=\min(\IF)=\floor{\DB}, \text{ the first door of } \DB,\\
    \ceil{\IF} &:=\max(\IF), \text{ the last door of } \IF,\\
    |\IF| &:=|\bb{Im}(\IF)|, \text{ the number of doors}.
\end{align*}

Also, we refer to the $i$-th gap-filling function as a \emph{sub-block $\SB_i$ of $\DB$}, written as $\SB_i\subblock\DB$:
\begin{align*}
    (\forall i<|\IF|)\; \SB_i &\subblock\DB \text{ is a bijection from time-stamp to door index},\\
    \SB &\text{ often also refers to } \Image(\SB), \text{ its set of doors},\\
    \Image(\SB_i) &\;=(\IF(\floor{\IF}+i),\IF(\floor{\IF}+i+1)), \text{ the } i\text{-th gap at } \IF,\\
    |\SB| &\;:=|\Image(\SB)| =|\dom(\SB)|, \text{ the number of doors},\\
    \min(\dom(\SB_0)) &\;=\max(\dom(\IF))+1, \text{ the first time stamp after } \IF,\\
    \min(\dom(\SB_{i+1})) &\;=\max(\dom(\SB_i))+1, \text{ the first time stamp after } \SB_i,\\
    \floor{\SB} &:=\min(\SB), \text{ the first door of } \SB,\\
    \ceil{\SB} &:=\max(\SB), \text{ the last door of } \SB.
\end{align*}

Thinking of the $\DB$ structure in terms of \emph{levels of block nestedness} will help us to generalize to future constructions against multiple opponents. To beat just one opponent, as we shall see, each $\SB$ above can be a translation function. As such, $\SB$ is considered its own increasing function and does not contain sub-blocks. Therefore, we think of these $\SB$'s as level 0 disordered blocks. On the other hand, the $\DB$ described earlier (Figure~\ref{fig:DB}) is considered to be at level $l=1$, since its sub-blocks are at level 0. We do not consider $\IF\ll\DB$ to be a sub-block of $\DB$. We shall see in Section~\ref{sec:construct-db2} and \ref{sec:construct-dbr} how higher level disordered blocks are used to fight more opponents.\\

Abusing more notation, we consider $\DB$ to be a concatenation of its increasing function followed by its sub-blocks:
\begin{equation} \label{eq:concat-db}
    \DB:= \;\IF^\frown \SB_0 \;^\frown \ldots^\frown \SB_{|\IF|}.
\end{equation}

\begin{figure}[h]
    \centering
    \includegraphics[width=0.5\textwidth]{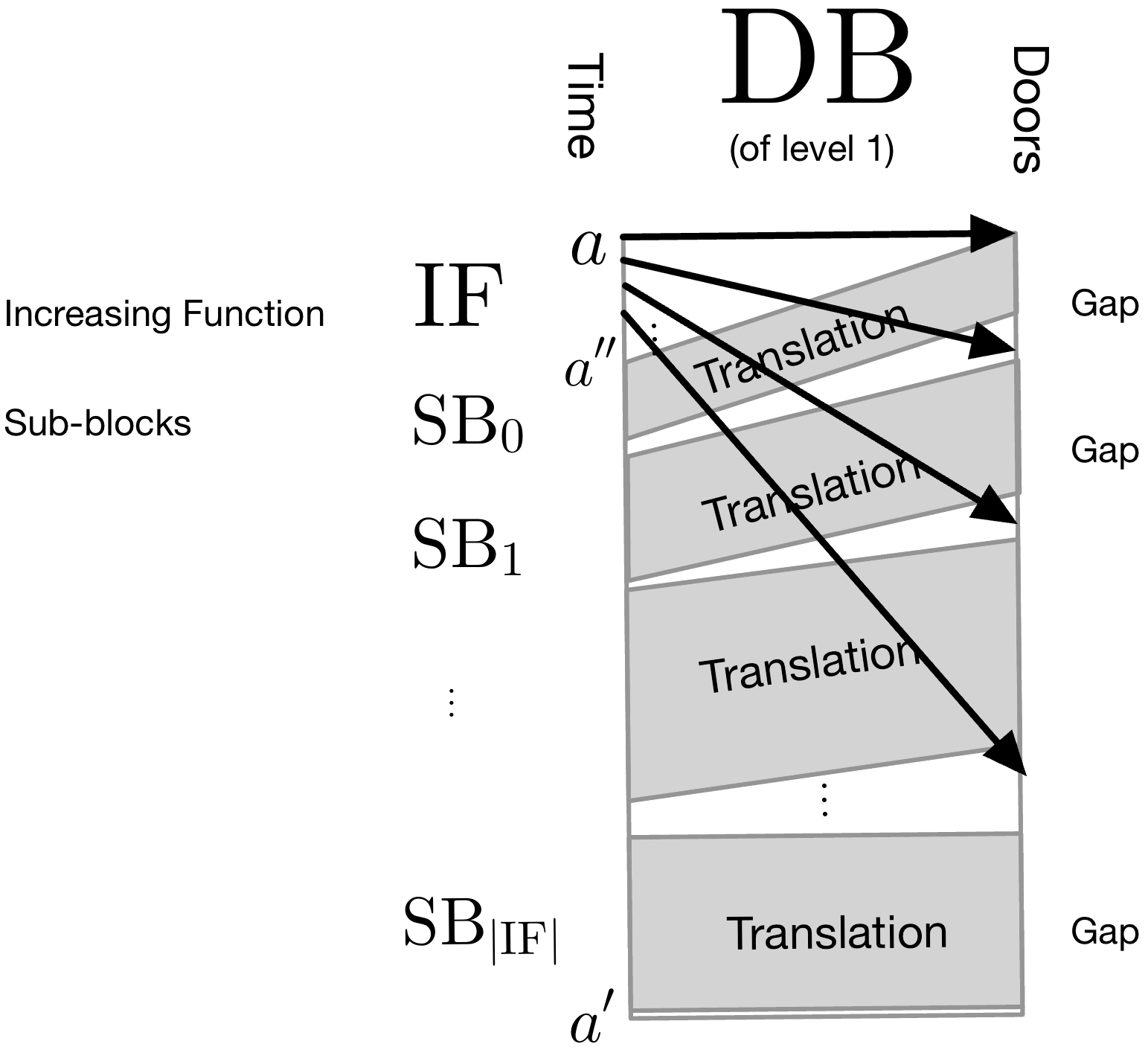}
    \caption{A $\DB$ of level 1 is a permutation that begins as an increasing function $\IF$, followed by a sequence of $|\IF|$ many sub-blocks $\SB$ that fill the gaps between $\IF$'s doors. At level 1, each $\SB$ is a translation function, and considered to be a degenerate disordered block of level 0, equal to its own increasing function and containing no sub-blocks.}
    \label{fig:DB}
\end{figure}

\subsubsection{Outcomes} \label{sec:outcome1}
The $s$-th disordered block $\DB$ shall be used to satisfy $\G_s$ and $P_{\f,s}$. We formalize in Algorithm $\oc$ the \emph{outcome of $\f$ within $\DB$}. Following earlier arguments, this outcome should be sparse if and only if $\f$ opens fewer than $s$ many doors at some $\SB\subblock\DB$. $\oc$ returns the first $\SB$ that $\f$ is sparse at. If none is found, the function will return $\NULL$ to indicate a dense outcome.

\begin{algorithm} \label{alg:outcome1}
    \caption{$\SB =\oc(\f,\DB)$\\ \Comment{Finds a sub-block that $\f$ is sparse at}}
    \begin{algorithmic}[1]
        \For{\textbf{each} $\SB\subblock\DB$}
            \If{$|\bb{Im}(\f)\cap\SB|<s$} \Comment{$\f$ opened $<s$ doors in $\SB$} \label{l:few-doors0}
                \State \Return $\SB$ \Comment{$\f$ is sparse within $\DB$ at $\SB$} \label{l:outcome-sparse0}
            \EndIf
        \EndFor
        \State \Return $\NULL$ \Comment{$\f$ is dense in $\DB$, or $\DB$ has no sub-blocks ($\DB$ is of level 0)} \label{l:outcome-dense0}
    \end{algorithmic}
\end{algorithm}

\begin{figure}[h]
    \centering
    \includegraphics[width=1.0\textwidth]{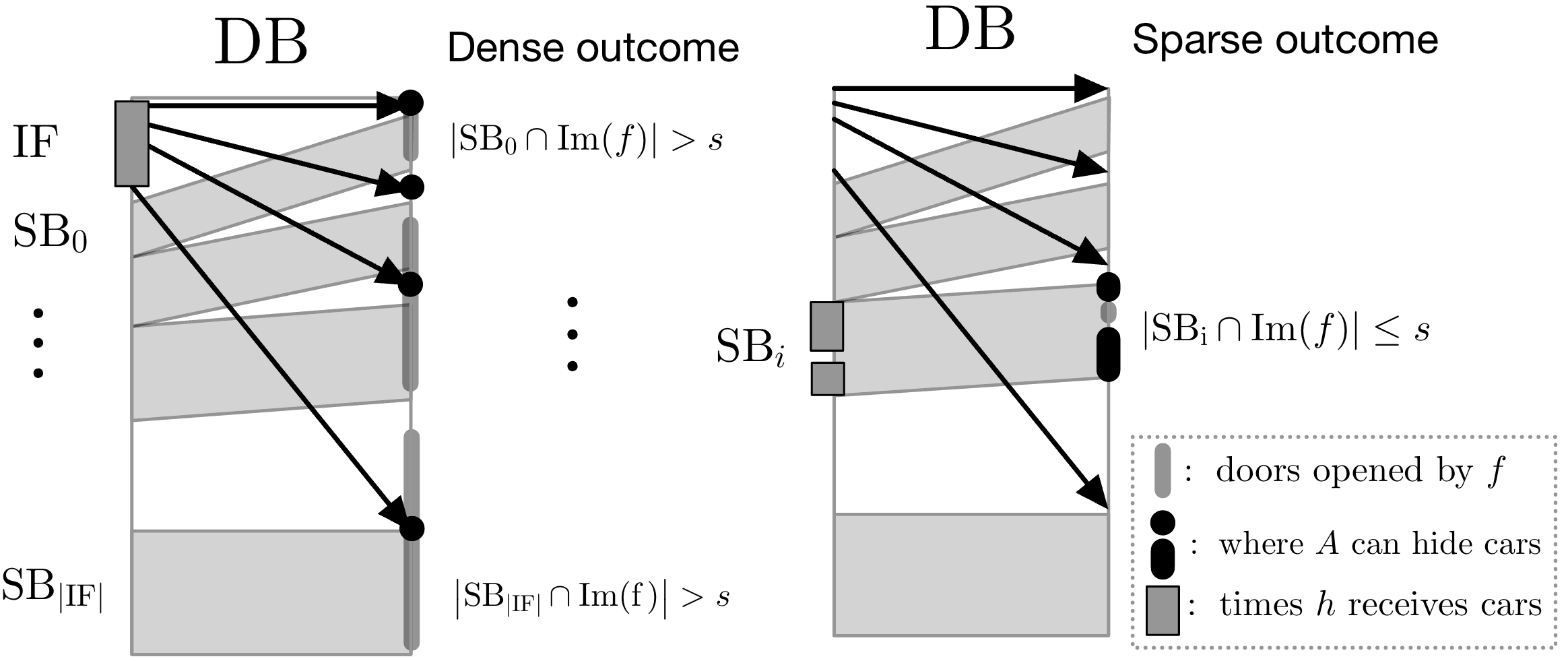}
    \caption{Working within $\DB$, opponent $\f$ has a dense outcome if $\f$ opens more than $s$ many doors at all $\SB\subblock\DB$, and has a sparse outcome otherwise. If dense, $\A$ will put its cars at $\IF\ll\DB$, and if sparse at $\SB$, $\A$ will is to put its cars at $\SB$ minus $\f$'s opened doors. In either cases, $\h$ will receive many cars where they are placed, while $\f$ rarely receives any.}
    \label{fig:outcomes-1}
\end{figure}

\subsubsection{Strategy} \label{sec:strategy1}
At stage $s$, we work only within the $s$-th disordered block $\DB$ and construct $\A\restriction\DB$. We also \emph{update} the following two sets by adding elements to them:
\begin{itemize}
    \item $\cars$ (\emph{Car restriction}) $:=$ doors that must contain cars,
    \item $\goats$ (\emph{Goat restriction}) $:=$ doors that must contain goats.
\end{itemize}

$\cars$ helps to ensure that $\h$ receives enough cars, while $\goats$ helps to ensure that $\f$ receives few cars. How should $\A\restriction\DB$ be constructed given the two possible outcomes of $\f$ within $\DB$? We formalize the strategy in this algorithm:
\[(\cars,\goats) =\cgone_s(\DB,\f,\cars,\goats),\]
which updates $\cars$ and $\goats$ to fill the doors of $\DB$. The algorithm is defined recursively for easier generalizations to future constructions. Our recursive depth for now will not exceed one because we are handling only a single opponent.\\

Consider the strategy if $\f$ is dense in $\DB$ ($\cgone$, line~\ref{l:dense0}). As outlined earlier, we will put cars within the increasing part of $\DB$ (line~\ref{l:dense-cars0}), making sure to fill the doors at all sub-blocks $\SB\subblock\DB$ with goats (line~\ref{l:dense-goats0}). On the other hand, if $\f$ is sparse in $\DB$ (line~\ref{l:sparse0}), say at $\SB$ (line~\ref{l:sparse-SB0}), we will ensure that no cars are placed outside of $\SB$ or within the doors in $\SB$ opened by $\f$ (line~\ref{l:sparse-goats0}). Some doors in $\SB$ remain unfilled, so we recurse into $\SB$ to fill them (line~\ref{l:recurse}). Since we only have one opponent, $\SB$ will not contain sub-blocks, so when we recurse into $\SB$, the outcome will be vacuously dense (line~\ref{l:dense0}). The algorithm will then put the cars into $\SB$'s increasing function (line~\ref{l:dense-cars0}), which is exactly $\SB$ itself, as desired.

\begin{algorithm} \label{alg:strategy0}
\caption{$(\cars,\goats) =\cgone_s(\DB,\f,\cars,\goats)$\\ \Comment{Fill $\DB$'s doors given opponent $\f$}}
\begin{algorithmic}[1]
    \small
    \State $\SB\gets\oc(\f,\DB)$ \label{l:sparse-SB0} \Comment{Find a sub-block that $\f$ is sparse at}
    \If{$\SB\neq\NULL$} \Comment{$\f$ is sparse at $\SB$} \label{l:sparse0}
        \State $\goats\gets \goats\cup(\DB-\SB)\cup(\Image(\f)\cap\SB)$ \label{l:sparse-goats0} \Comment{Put goats outside $\SB$ and in $\f$'s doors}
        \State \Return $\cgone_s(\SB,\f,\cars,\goats)$ \label{l:recurse0} \Comment{Recurse to finish filling $\SB$'s doors}
    \Else \Comment{All $\f\in\F$ are dense in $\DB$, or $\DB$ has no sub-blocks} \label{l:dense0}
        \State $\IF\gets$ the increasing part of $\DB$ \Comment{If $\DB$ is of level 0, $\IF=\DB$}
        \State $\cars\gets \cars\cup\IF-\goats$ \label{l:dense-cars0} \Comment{Put cars in increasing part}
        \State $\goats\gets \goats\cup\DB-\cars$ \label{l:dense-goats0} \Comment{Put goats in all other doors}
        \State \Return $\cars,\goats$ \Comment{Doors in $\DB$ are filled. Terminate recursion.} \label{l:terminate0}
    \EndIf
\end{algorithmic}
\end{algorithm}

How large should the increasing function and sub-blocks of $\DB$ be in order for $\h$ to receive enough cars? If $\f$ is sparse at $\SB\subblock\DB$, the cars will all be at $\SB$, minus the less than $s$ many doors opened by $\f$. Now $\h$ opens all the doors of $\SB$ in a single time-interval. To satisfy $\G_s$, we want $|\SB|$ to be large enough so that in spite of the disappointment from less than $s$ many goats, the cars received by $\h$ will exceed the number of doors they have opened so far by a factor of at least $s$. In fact, we shall account for $s^3$ many goats, not just $s$, to help us generalize to future constructions. Since we have defined our strategy recursively, the cars are placed after we have recursed into $\SB$ (line~\ref{l:recurse0}), at line~\ref{l:dense-cars0}, where $\SB$ is itself an increasing function. Thus, the following \emph{largeness condition} on $\SB$ can satisfy $G_s$ via $n=\min(\dom(\SB))+s^3$:
\begin{equation} \tag{largeness condition} \label{eq:large-condition}
    |\SB|\geq [\min(\dom(\SB))+s^3]\times s.
\end{equation}
Similarly, if $\f$ is dense in $\DB$, the cars will be placed in $\IF\ll\DB$, and the same \ref{eq:large-condition} will suffice for $G_s$.

\subsubsection{Construction} \label{sec:construct-db}
$h$ needs to be computable. Hence, the $\DB$'s cannot in general be constructed with respect to the opponents they will defeat. We shall construct $\DB_s$ at stage $s$ using an effective procedure, separate from the construction of $A$, which uses $\emptyset'$ as an oracle. We construct each $\DB$ by induction on nestedness levels $l$, as we have illustrated in Figures~\ref{fig:DB} and~\ref{fig:DB-r}. The pseudo-code for constructing $\DB_s$ is in this algorithm:
\[\DB = \con_s(l,t,d).\]
Again, the algorithm is recursive so that we can generalize easily to future constructions. With only one opponent, our $\DB$s' levels do not exceed 1, bounding the recursive depths by one. $\con_s(t,d,l)$ returns at stage $s$ a disordered block of level $l$, which is an injection from time-interval $[t,\_]$ to door indices $[d,\_]$ of the same length.

As described earlier, the $\DB$ ($\con$, line~\ref{l:DB}) begins with an increasing function $\IF\ll\DB$ (line~\ref{l:IF}) whose size satisfies the \ref{eq:large-condition}. $\IF$ is followed by a sequence of $|\IF|$ many sub-blocks $\SB$ (lines~\ref{l:SB}). $\IF$ maps to the door above each sub-block (line~\ref{l:IF-door}). Each $\SB$ is considered a disordered block of level $l-1$, so we recurse into $\SB$ for its construction (line~\ref{l:SB-recurse}). When we reach the terminating depth at $l=0$, the disordered block will not have sub-blocks (line~\ref{l:TB}), and is considered its own increasing function. In other words, the disordered block is simply a translation function.

\begin{algorithm} \label{alg:db}
    \caption{$\con_s(l,t,d)$\\ \Comment{Constructs level $l$ $\DB$ with starting time $t$ and first door $d$}}
    \begin{algorithmic}
        \State \textbf{Define} $\IF: t+[0,(t+s^3)s]\to\omega$ \Comment{$|\IF|$ satisfies \ref{eq:large-condition}} \label{l:IF}
        \State $t'\gets \max(\dom(\IF))+1$ \Comment{First time-stamp of next sub-block}
        \State $d'\gets d+1$ \Comment{First door of next sub-block}
        \For{$i=0$ \textbf{to} $|\IF|$} \Comment{Construct $\SB_i$} \label{l:SB}
            \State $\IF(t+i) =d'-1$ \Comment{Map to the door above $\SB_i$} \label{l:IF-door}
            \If{$l\geq 1$} \Comment{$\SB_i$ is a disordered block of level $l-1$}
                \State $\SB_i =\con_s(t',d',l-1)$ \Comment{Recurse to construct $\SB_i$} \label{l:SB-recurse}
                \State $t'\gets \max(\dom(\SB_i))+1$ \Comment{Update first time-stamp}
                \State $d'\gets \max(\Image(\SB_i))+2$ \Comment{Update first door}
            \Else \Comment{$\DB$ is of level 0}
                \State $\SB_i=\emptyset\to\emptyset$ \Comment{A level zero $\DB$ has no sub-blocks} \label{l:TB}
                \State $d'\gets d'+1$
            \EndIf
        \EndFor
        \State $\DB =\IF^\frown \SB_0\;^\frown \SB_1\;^\frown \ldots ^\frown \SB_{|\IF|}$ \Comment{Equation~\eqref{eq:concat-db}} \label{l:DB}
        \State \Return $\DB$
    \end{algorithmic}
\end{algorithm}

We are now ready to construct $\h$. Our current set of opponents are
\[\F=\{\f_0\}.\]

\underline{Constructing $\h$:} Initialize $\h:\emptyset\to\emptyset$. At stage $s$,
\begin{enumerate}
    \item $\DB_s=\con_s(|\F|,\max(\dom(\h)),\max(\Image(\h)))$.
    \item Concatenate $\DB_s$ to $\h$.
    \item Go to stage $s+1$.
\end{enumerate}

Using $\emptyset'$ as an oracle, we can now construct $\A$.\\

\underline{Constructing $\A$:} Initialize $\A=\cars=\goats=\emptyset$. At stage $s$:
\begin{enumerate}
    \item Let $\DB_s$ be the $s$-th disordered block of $h$.
    \item Use $\emptyset'$ to get $\sigma=\f_0\restriction\DB_s$.
    \item $\cars,\goats =\cgone_s(\DB_s,\sigma,\cars,\goats)$.
    \item $A\restriction\DB_s =\cars\restriction\DB_s$.
    \item Go to stage $s+1$.
\end{enumerate}

\subsubsection{Verification} \label{sec:verify1}
We want to show for all $s\geq 1$ that $\G_s$ and $\Pos_s=\Pos_{\f_0,s}$ are satisfied. Note that $\G_0$ and $\Pos_0$ will hold from $\G_1$ and $\Pos_1$ respectively. When considering $\Pos_s$, we can assume $\f=\f_0$ is total, otherwise the requirement is satisfied vacuously. Also, it will be enough to show that the inner-clause of $\Pos_s$ holds when we work within $\DB_s$: This implies that for all $t\geq s$, the inner-clause of $\Pos_t$, and hence of $\Pos_s$, is satisfied. Putting the effects from $\DB_{\geq s}$ together, we satisfy $\Pos_s$ via
\[m=\min(\dom(\DB_s)).\]

We work within $\DB_s$ and show how $A\restriction \DB_s$ satisfies $\G_s$ and also the inner-clause of $\Pos_s$. If $\f$ was dense in $\DB_s$, we would have put cars at $\IF\ll\DB_s$ ($\cgone$, line~\ref{l:dense-cars0}). The \ref{eq:large-condition} of $\IF$ ($\con$, line~\ref{l:IF}) ensures that any lack of cars received by $\h$ so far would be compensated at least $s$ many times, thereby satisfying $\G_s$. The denseness of $\f$'s outcome means for every car $\f$ might receive from opening a door in $\IF$, $\f$ would receive more than $s$ many goats in the sub-block $\SB\subblock\DB_s$ that follows, thereby satisfying $\Pos_s$.\\

On the other hand, if $\f$ was sparse in $\DB_s$, say at $\SB\subblock\DB_s$, then cars would be placed in $\SB$ minus the doors opened by $\f$ in $\SB$. The subtraction satisfies $\Pos_s$ immediately. The \ref{eq:large-condition} of $\SB$ would again give $\h$ at least $s$ times as many cars as they have received so far, in spite of the subtraction of no more than $s$ cars, thereby satisfying $\G_s$.

\subsection{Beating two $f$'s using two-nested $\DB$'s}
What happens if we play against two opponents $\f_0$ and $\f_1$? Working within some $\DB$ constructed at stage $s$, $\f_0$ and $\f_1$ can have sparse or dense outcomes, giving $2^2$ possibilities.\\

The easiest situation is when both outcomes are dense. Then by the original strategy against one opponent, $\A$ will put cars at $\IF\ll\DB$, and the arguments for this strategy apply to both opponents.\\

So consider the situation when one opponent, say $\f_0$, has a sparse outcome, say at $\SB\subblock\DB$. If $\f_0$ was the only opponent, we would have put all the cars in $\SB$ minus the doors opened by $\f_0$. If $\f_1$ is also sparse at $\SB$, this original strategy remains valid as long as we subtract also the less than $s$ many doors opened by $\f_1$, and ensure that the \ref{eq:large-condition} accounts for this bounded subtraction.\\

But what if $\f_1$ is dense at $\SB$? $\G_s$ wants us to put many cars into $\SB$. Yet, $\Pos_{\f_1,s}$ wants us to put a car only after $\f_1$ has received at least $s$ many consecutive goats. The solution is to make $|\SB|$ so large as to still have enough cars for $\G_s$ even if we skip over roughly $s$ many doors before adding a car in $\SB$. To ensure that $\h$ opens mostly these car-filled doors consecutively, we shall make $\SB$ begin as an increasing function that maps to those doors, thereby satisfying $\G_s$. The function will be followed by a sequence of bijections that fill the gaps of the increasing part. Such a structure is like a level 1 disordered block, so we give $\SB$ the same structure as the $\DB$ constructed against one opponent.

\subsubsection{Two-nested $\DB$ structure} \label{sec:construct-db2}
Summarizing, to beat two opponents, we use 2-nested disordered blocks, where every sub-block is no longer a translation function, but a 1-nested disordered block. Letting $\DB^k$ denote a $k$-nested $\DB$, $h$ will be a series of $\DB^2$'s:

\begin{align} \label{eq:concat-h2}
    \h =\DB^2_0 \;^\frown \DB^2_1 \;^\frown \DB^2_2 \;^\frown \ldots,
\end{align}
where each $\DB^2$ is a bijection between intervals (Figure~\ref{fig:DB-r} with $r=1$), and has the structure (abusing concatenation notation)
\begin{align} \label{eq:concat-db2}
    \DB^2 =\IF^{\frown} \DB^1_0 \;^\frown \DB^1_1 \;^\frown \ldots \DB^1_{|\IF|},
\end{align}

where
\[\IF = \text{ the increasing function that points to the door before each } \DB^1_i,\]
and has a domain interval long enough to satisfy the \ref{eq:large-condition}, and each $\DB^1$ is a 1-nested disordered block from Section~\ref{sec:DB} whose components all satisfy the \ref{eq:large-condition}.\\

Formally, to construct $\DB^2$ with the structure of Equation~\eqref{eq:concat-db2} at stage $s$, we call $\con_s(2,t,d)$, whose starting time and door are $t$ and $d$ respectively. The overall construction of $\h$ that satisfies Equation~\eqref{eq:concat-h2} was given at the end of Section~\ref{sec:construct-db}, but with the following set of opponents
\[\F=\{\f_0,\f_1\}.\]

To construct $\A$, we modify $\cgone$ to $\cgr$, which can take in multiple opponents $\F$. Following earlier description, in an easy case, all opponents are dense in $\DB$ ($\cgr$, line~\ref{l:dense1}). Our strategy is to put cars only in $\IF\ll\DB$ (lines~\ref{l:dense-cars1}, \ref{l:dense-goats1}).\\

But if one of them, say $\f$, is sparse in $\DB$ (line~\ref{l:sparse1}), say at $\SB\subblock\DB$ (line~\ref{l:sparse-SB1}), we will keep cars outside of the doors opened by $\f$ and the doors not in $\SB$ (line~\ref{l:sparse-goats1}). It remains to fill the doors in $\SB$ different from those opened by $\f$ (line~\ref{l:recurse1}). $\f$ never opens these remaining doors, so we can remove $\f$ from future consideration within this $\DB$. To fill these doors, we recurse into $\SB$ (line~\ref{l:recurse1}), which will consider the outcome of $\g\in\F-\{f\}$ in $\SB$. Since we have not forced any doors in $\DB$ to contain cars so far, $\g$ would not have received any cars from $\DB$ yet, allowing us to add cars to $\SB$ without worrying about the combined effects with doors outside $\SB$.\\

In this recursive depth, the algorithm will treat $\SB$ like a disordered block of level one, behaving just like in $\cgone$ with opponent $\g$: If $\g$ is dense in $\SB$, we would put cars only at $\SB$'s increasing part (line~\ref{l:dense-cars0}) and terminate. But if $\g$ is sparse in $\SB$ at $\SB'\subblock\SB$ (line~\ref{l:sparse-SB1}), we keep cars out of $\g$'s doors and doors not in $\SB'$ (line~\ref{l:sparse-goats1}), then remove $\g$ from consideration and recurse to fill the doors of $\SB'$ (line~\ref{l:recurse1}), which is of level 0. This second depth recursion will have a vacuously dense outcome from $\SB'$'s lack of sub-blocks (line~\ref{l:dense1}), and therefore place the cars at the increasing part of $\SB'$, which is all of $\SB'$ itself (line~\ref{l:dense-cars1}).\\

\begin{algorithm} \label{alg:strategy1}
\caption{$(\cars,\goats) =\cgr_s(\DB,\F,\cars,\goats)$\\ \Comment{Fill $\DB$'s doors given opponents $\F$}}
\begin{algorithmic}[1]
    \small
    \State $\f\gets \mu(\f\in\F)[\oc(\f,\DB)\neq\NULL]$ \Comment{Find opponent with sparse outcome}
    \If{$\f$ exists} \Comment{Some $\f$ is sparse in $\DB$} \label{l:sparse1}
        \State $\SB\gets\oc(\f,\DB)$ \Comment{Find $\SB$ that $\f$ is sparse at} \label{l:sparse-SB1}
        \State $\goats\gets \goats\cup(\DB-\SB)\cup(\Image(\f)\cap\SB)$ \label{l:sparse-goats1} \Comment{Put goats outside $\SB$ and in $\f$'s doors}
        \State \Return $\cgr_s(\SB,\F-\{\f\},\cars,\goats)$ \label{l:recurse1} \Comment{Recurse to finish filling $\SB$'s doors}
    \Else \Comment{All $\f\in\F$ are dense in $\DB$, or $\DB$ has no sub-blocks, or $\F=\emptyset$} \label{l:dense1}
        \State $\IF\gets$ the increasing part of $\DB$ \Comment{If $\DB$ is of level 0, $\IF=\DB$}
        \State $\cars\gets \cars\cup\IF-\goats$ \label{l:dense-cars1} \Comment{Put cars in increasing part}
        \State $\goats\gets \goats\cup\DB-\cars$ \label{l:dense-goats1} \Comment{Put goats in all other doors}
        \State \Return $\cars,\goats$ \Comment{Doors in $\DB$ are filled. Terminate recursion.} \label{l:terminate1}
    \EndIf
\end{algorithmic}
\end{algorithm}

\begin{figure}[h]
    \centering
    \includegraphics[width=0.5\textwidth]{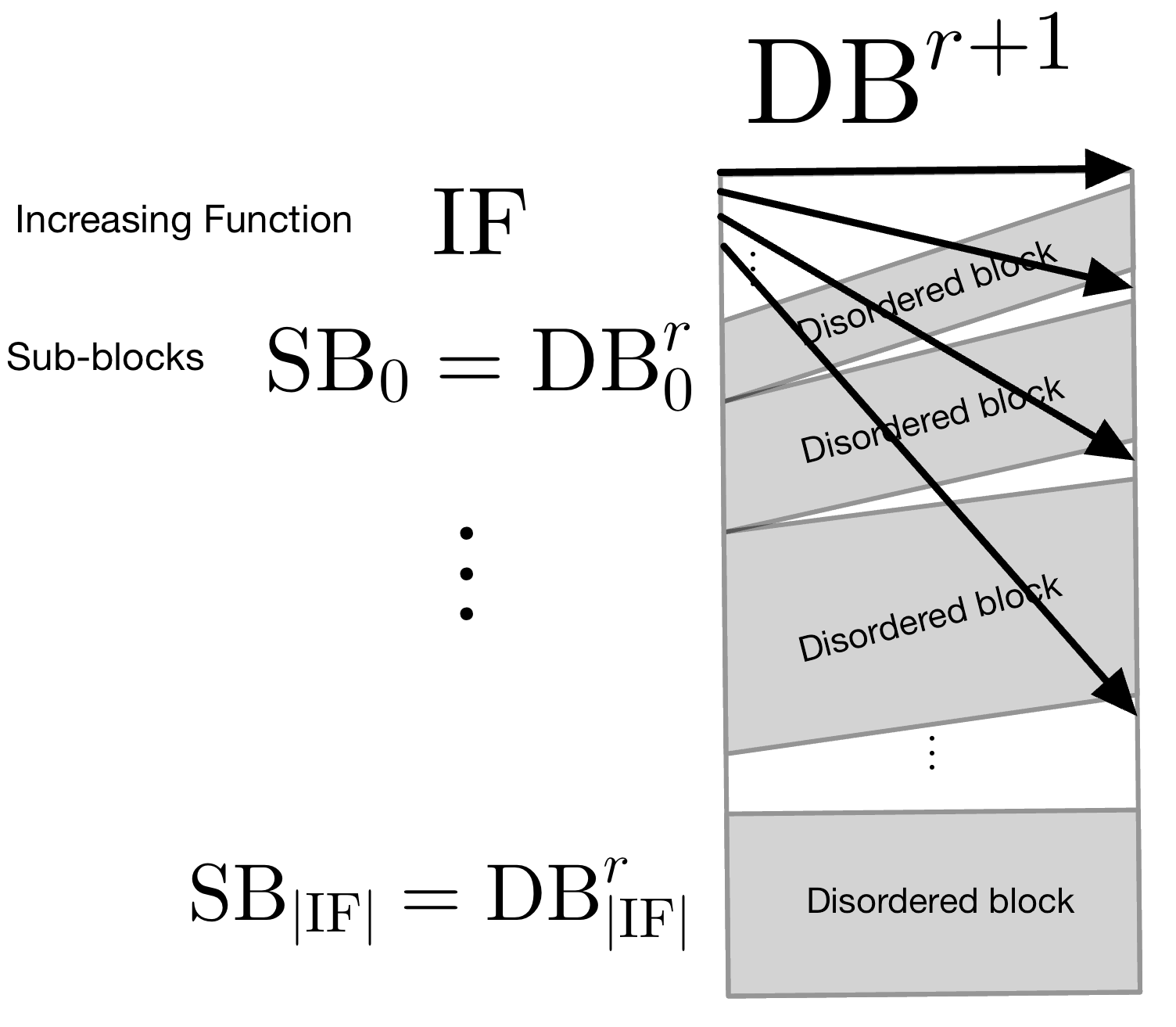}
    \caption{Given $r\geq1$, $\DB^{r+1}$ is a disordered block of level $r+1$. It is a bijection from time to door intervals that starts as an increasing function $\IF$, followed by $|\IF|$ many lower level disordered blocks $\DB^r$ that fill the gaps between $\IF$'s doors. The base structure $\DB^1$ is sketched in Figure~\ref{fig:DB}.}
    \label{fig:DB-r}
\end{figure}

\subsection{Beating $r$ many $f$'s by using $r$-nested $\DB$'s}
The ideas for beating two opponents can be generalized naturally to beating $r$ many. We shall show by induction on opponent number $r$ that a concatenation of $r$-nested $\DB$'s
\begin{align} \label{eq:concat-hr}
    \h =\DB^r_0 \;^\frown \DB^r_1 \;^\frown \DB^r_2 \;^\frown \ldots
\end{align}
can beat $\F=\{\f_0,\ldots,\f_{r-1}\}$.

Consider the addition of a new opponent $\f_r$ into $\F$. Following the strategy for the case of two opponents, if all opponents are dense in $\DB$ ($\cgr$, line~\ref{l:dense1}), we put cars only in $\IF\ll\DB$ (lines~\ref{l:dense-cars1}, \ref{l:dense-goats1}). But if some $\f$ is sparse in $\DB$ (line~\ref{l:sparse1}), say at $\SB\subblock\DB$ (line~\ref{l:sparse-SB1}), we fill doors opened by $\f$ and outside of $\SB$ with goats (line~\ref{l:sparse-goats1}), then recurse into $\SB$ to fill its empty doors, removing $\f$ from consideration (line~\ref{l:recurse1}).\\

By induction on $r$, to beat the remaining $r$ many opponents when we work within $\SB$, $\SB$ needs only to have the structure of a disordered-block of level $r$. In other words, $\DB$ needs to be a level $(r+1)$ disordered block $\DB^{r+1}$, which completes the sketch of the inductive step.

\subsubsection{$r$-nested $\DB$ structure} \label{sec:construct-dbr}
Assume we have the structure of $\DB^{r-1}$. To get the structure of $\DB^r$ (Figure~\ref{fig:DB-r}), we generalize the formulas of Section~\ref{sec:construct-db2}:

\begin{align} \label{eq:concat-dbr}
    \DB^r =\IF^{\frown} \DB^{r-1}_{0} \;^\frown \DB^{r-1}_{1} \;^\frown \ldots \DB^{r-1}_{|\IF|},
\end{align}

where
\[\IF = \text{ the increasing function that points to the door before each } \DB^{r-1}_i,\]
and has a domain interval long enough to satisfy the \ref{eq:large-condition}, and each $\DB^{r-1}$ is a level $r-1$ disordered block. 

To see how the \ref{eq:large-condition} makes each component of $\DB^r$ large enough for $\G_s$, we keep track of the number doors that are forced to contain goats in the component that $\h$ will get its cars from. We recurse into a sub-block component ($\cgr$, line~\ref{l:recurse1}) to narrow down the placement of cars (line~\ref{l:dense-cars1}) only when some $\f\in\F$ has a sparse outcome. Every time we recurse, the sub-block we zoom into must fill the doors opened by $\f$ with goats (line~\ref{l:sparse-goats1}). There are no more than $s$ many such doors due to sparseness, and the maximum depth of recursion cannot exceed $|\F|=r$. Hence, the final component where cars reside needs to account for no more than $rs$ many goats. For large enough $s$, $rs\leq s^2$, which will not exceed the $s^3$ many goats that the \ref{eq:large-condition} accounts for.\\

Formally, to construct $\DB^r$ with the structure of Equation~\eqref{eq:concat-dbr} at stage $s$, we call $\con_s(r,t,d)$, whose starting time and door are $t$ and $d$ respectively. The construction of $\h$ that satisfies Equation~\eqref{eq:concat-hr}, and of $\A$ that satisfies $\G$ and $\Pos$, was given at the end of Section~\ref{sec:construct-db}, but using $\cgr$ instead of $\cgone$, and using opponents $\F=\{\f_0,\ldots,\f_{r-1}\}$.

\subsubsection{Verification} \label{sec:verifyr}
We follow the outline of Section~\ref{sec:verify}. Let $\F=\{\f_0,\ldots,\f_{r-1}\}$. Given stage $s>r$, we work within $\DB_s=\DB^r_s$ and show how $A\restriction \DB_s$ satisfies $\G_s$ and the inner-clause of $\Pos_{\f,s}$ for every $\f\in\F$.\\

Note that when we call $\cgr_s(\DB_s,\F,\cars,\goats)$ at stage $s$ to fill the doors of $\DB_s$, the maximum recursion depth cannot exceed $|\F|=r$, because we remove one opponent every time we recurse ($\cgr$, line~\ref{l:recurse1}), and the algorithm must terminate (line~\ref{l:terminate1}) when there are no more opponents (line~\ref{l:dense1}). Let $\DB$ be the component of $\DB_s$ that we have zoomed into at the final depth, and let $\F_0\subseteq\F$ be the set of opponents that remain prior to terminating.\\

Let $\f\not\in\F_0$ and consider the cars received by $\f$ within $\DB_s$. $\f$ must have been removed at a shallower depth (line~\ref{l:recurse1}), which means $\f$ was sparse at some component of $\DB_s$ that contains $\DB$ (line~\ref{l:sparse-SB1}). Within $\DB$, $\f$ cannot receive cars, because just prior to removing $\f$, we have forced the the doors that $\f$ will visit in the component to contain goats (line~\ref{l:sparse-goats1}). These goat restrictions are respected at the terminating depth when we select doors to contain cars (line~\ref{l:dense-cars1}). Also, no car resides outside $\DB$ because we only put cars in the terminating component $\DB$ (line~\ref{l:dense-cars1}). So $\f$ receives no cars in $\DB_s$, which satisfies $\Pos_{\f,s}$ trivially.\\

Now let $\f\in\F_0$ and consider the cars that $\f$ will receive. Again, outside $\DB$, no cars can be obtained. At the terminating depth (line~\ref{l:dense1}) when we place cars in $\DB$, if $\F_0$ is non-empty, then $\DB$ must contain sub-blocks, because $\DB_s$ is of level $r+1$ and the depth of recursion does not exceed $r$. Thus if $\f$ exists, it must be dense in $\DB$, and we would place the cars in $\IF\ll\DB$ (line~\ref{l:dense-cars1}). This placement ensures that every possible car $\f$ receives from $\IF$ would be followed by at least $s$ many goats in the next sub-block $\SB\subblock\DB$, thereby satisfying $\Pos_{\f,s}$.\\

Finally, consider the cars that $\h$ will receive. These cars reside in $\DB$, whose components satisfy the largeness condition. The condition tolerates $s^3$ many goats within $\DB$, which is more than the up to $sr$ many goats enforced by $\F$.

\subsection{Beating all $f$'s by using increasingly nested $\DB$'s} \label{sec:DB-all}
With countably many opponents, our strategy is to handle the first $s$ many of them
\[\F_s:=\{\f_0,\f_1,\ldots,\f_{s-1}\}\]
at stage $s$. Since $\F_s$ can be beaten using disordered blocks $\DB^l$ of levels $l\geq s$, we construct $\h$ to be a series of increasingly nested disordered blocks:
\begin{align} \label{eq:concat-h-all}
    \h=\DB^0\;^\frown \DB^1\;^\frown \DB^2\;^\frown \ldots,
\end{align}
where the structure of $\DB^l$ was given in Section~\ref{sec:construct-dbr}. The construction of $\h$ and $\A$ are is also similar to the ones at the end of Section~\ref{sec:construct-db}.

\underline{Constructing $\h$:} Initialize $\h:\emptyset\to\emptyset$. At stage $s$,
\begin{enumerate}
    \item $\DB_s =\con_s(s,\max(\dom(\h)),\max(\Image(\h)))$.
    \item Concatenate $\DB_s$ to $\h$.
    \item Go to stage $s+1$.
\end{enumerate}

\underline{Constructing $\A$:} Initialize $\A=\cars=\goats=\emptyset$. At stage $s$:
\begin{enumerate}
    \item Let $\DB_s$ be the $s$-th disordered block of $h$.
    \item Use $\emptyset'$ to get $\F_s:=\{\f_r\restriction\DB_s: r\leq s\}$.
    \item $\cars,\goats =\cgr_s(\DB_s,\F_s,\cars,\goats)$.
    \item $A\restriction\DB_s =\cars\restriction\DB_s$.
    \item Go to stage $s+1$.
\end{enumerate}

\subsubsection{Verification} \label{sec:verify}
To satisfy $\Pos_{\f_r,t}$, work within $\DB^s$ for any $s>\max(r,t)$; then apply the same argument for the $\Pos$-requirements as in Section~\ref{sec:verifyr}. To satisfy $\G_s$, work within $\DB^s$; $\h$ will receive enough cars from $\DB^s$ due to the largeness condition of $\DB^s$'s components, just like in Section~\ref{sec:verifyr}.

\subsection{Not Church Stochastic}
Note that this construction produces a set which is not Church stochastic: since $A$ is computably small, it has density $0$. So for it to be Church stochastic, it must be stochastic with parameter $0$. Then consider the selection rule which checks every door in order, and upon finding a car in the range of any $IF$ or level 0 $SB$ under $h$, selects the remainder of the block. This is an orderly, adaptive strategy which selects a sequence of positive density on $A$: by construction, any such block containing some element of $A$ contains enough elements to have high density after inverting $h$. 

\subsection{Extending to positive density} \label{sec:alpha}

Proving separations for $0$ makes sense from the standpoint of computability theory, as each of these is a notion of immunity. However, from a standpoint of the original motivating game, $0$ is uninteresting as a probability. Thus we shall show that the separation between computable density and intrinsic density holds for every $\alpha\in (0,1]$ by way of the separation for $0$.

\begin{lemma} \label{lemma:alpha}
If there is $X$ such that $X$ is computably small but not intrinsically small, then for all $\alpha$ there is some $Z$ with $R(Z)=\alpha$ but $Z$ does not have intrinsic density $\alpha$.
\end{lemma}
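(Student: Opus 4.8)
The plan is to reduce the positive-density case to the density-$0$ hypothesis by transporting $X$ through the density-combining operations $\into$ and $\within$ of \cite{thesis}. The key preliminary observation is that $X$ has \emph{no} intrinsic density at all: since $X$ is computably small, any value of its intrinsic density would have to equal $R(X)=0$ (intrinsic density $\beta$ implies $R=\beta$), but $X$ is not intrinsically small, so no such value exists. I will use three facts, all available from the techniques of \cite{thesis} together with the remark that $R$ obeys the same arithmetic as intrinsic density under joins, $\into$, and $\within$: (i) for every $\beta\in[0,1]$ there is a set of intrinsic density $\beta$; (ii) if $W$ has intrinsic density $\beta$ then $R(A\into W)=R(A)\cdot\beta+(1-\beta)$; and (iii) the operation $A\mapsto A\into W$ \emph{reflects} intrinsic density when $0<\beta<1$, i.e.\ $A\into W$ has intrinsic density $\gamma$ iff $A$ has intrinsic density $(\gamma-(1-\beta))/\beta$.

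For $0<\alpha<1$, fix by (i) a set $W$ of intrinsic density $1-\alpha$, so that $R(W)=1-\alpha$, and set $Z:=X\into W$. Then by (ii),
\[
R(Z)=R(X)\cdot(1-\alpha)+\bigl(1-(1-\alpha)\bigr)=0+\alpha=\alpha,
\]
as desired. For the failure of intrinsic density, suppose toward a contradiction that $Z$ had intrinsic density $\alpha$. Applying (iii) with $\beta=1-\alpha$ and $\gamma=\alpha$ would give that $X$ has intrinsic density $(\alpha-\alpha)/(1-\alpha)=0$, i.e.\ $X$ is intrinsically small, contradicting the hypothesis. Hence $Z$ does not have intrinsic density $\alpha$, completing the case $0<\alpha<1$.

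The endpoint $\alpha=1$ must be handled separately, since there the natural choice would need $W$ intrinsically small, collapsing the $\within$-part and destroying the reflection. Instead take $Z:=\overline{X}$. Complementing a set complements its computable density (via $f^{-1}(\overline X)=\overline{f^{-1}(X)}$ for increasing $f$), so $R(X)=0$ gives $R(\overline X)=1$; and $\overline X$ has intrinsic density $1$ iff $X$ has intrinsic density $0$, which fails. (The case $\alpha=0$ is the hypothesis itself, witnessed by $Z=X$.)

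The main obstacle is fact (iii): the reflection of intrinsic density through $\into$ precisely when $W$ is necessarily noncomputable (no infinite computable set has intrinsic density in $(0,1)$, since enumerating it increasingly already gives density $1$). The forward arithmetic (ii) only reports the value when a density exists; to rule out $Z$ having intrinsic density $\alpha$ I need that $X$'s fluctuations along some computable injection survive the embedding into $W$ rather than being masked by the well-behaved $\overline W$ part. Because $W$ is noncomputable one cannot simply lift $X$'s witnessing injection by composing with the enumeration of $W$; instead I would argue via the permutation/measure characterization of intrinsic density used in \cite{thesis}, under which $\into$ by a set of intrinsic density $\beta\in(0,1)$ scales density fluctuations by the nonzero factor $\beta$ and hence preserves their nonvanishing. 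This reflection step is where the real work lies; the density computation (ii) and the endpoint arguments are routine.
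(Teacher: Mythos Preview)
Your route is genuinely different from the paper's, but the reflection principle (iii) on which it hinges is a real gap rather than a routine citation.

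The paper's argument is elementary and avoids $\into$ entirely. Take any $Y$ with $R(Y)=\alpha$; since $R(X)=0$ one gets $R(Y\setminus X)=R(Y\cup X)=\alpha$. Now $Y\cup X=(Y\setminus X)\sqcup X$ is a disjoint union, so for every permutation $\pi$ one has $\rho_n(\pi(Y\cup X))=\rho_n(\pi(Y\setminus X))+\rho_n(\pi(X))$. If $Y\setminus X$ happens to have intrinsic density $\alpha$, choose the computable $\pi$ witnessing $\overline\rho(\pi(X))\ge q>0$; then $\rho_n(\pi(Y\setminus X))\to\alpha$ while $\rho_n(\pi(X))>q$ infinitely often, forcing $\overline\rho(\pi(Y\cup X))>\alpha$. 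Either $Y\setminus X$ or $Y\cup X$ works, and nothing beyond additivity of $\rho_n$ on disjoint sets is used. The crucial feature is that the \emph{same} witnessing permutation $\pi$ serves for both $X$ and the candidate $Z$.

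Two issues with your approach. First, formula (ii) does not match the $\into$ of \cite{thesis}: with the standard $A\into W=\{p_W(n):n\in A\}$ the density is $R(A)\cdot\beta$, so $R(X\into W)=0$, not $\alpha$. You appear to intend $Z=(X\into W)\cup\overline W$; that is fixable, but it already brings you close to the paper's ``union with a density-$\alpha$ set'' move. Second, and more seriously, (iii) is not a result of \cite{thesis}, and your sketch does not establish it. You correctly name the obstacle: the computable permutation $\sigma$ witnessing $\overline\rho(\sigma(X))>0$ cannot be transported through the noncomputable $p_W$. The heuristic that $\into$ ``scales density fluctuations by $\beta$'' is valid for \emph{asymptotic} density along the identity, but intrinsic density quantifies over all computable permutations, and nothing in your argument produces a computable permutation of $X\into W$ that sees the fluctuation $\sigma$ sees in $X$. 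It is entirely plausible that for $W$ sufficiently random relative to $X$ the set $X\into W$ \emph{is} intrinsically small even though $X$ is not, which would make (iii) false outright. The paper's disjoint-union trick sidesteps all of this precisely because no change of witness is required.
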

\begin{proof}
Let $R(X)=0$ and $R(Y)=\alpha$. Then it follows immediately that $R(Y\setminus X)=R(Y\cup X)=\alpha$. Now suppose $X$ is not intrinsically small with witnessing permutation $\pi$. If $Y\setminus X$ does not have intrinsic density $\alpha$, then we are done. Thus assume it does have intrinsic density $\alpha$, and consider $\rho_n(\pi(Y\cup X))$. We have that
\[\rho_n(\pi(Y\cup X))=\rho_n(\pi(Y\setminus X))+\rho_n(\pi(X))\]
Then there exists $k$ such that $\alpha-\frac{q}{2}<\rho_n(\pi(Y\setminus X))<\alpha+\frac{q}{2}$ for all $n>k$ because $P(Y\setminus X)=\alpha$. By assumption, there is $q>0$ such that $\rho_n(\pi(X))>q$ infinitely often. But then, for any $m>k$ such that $\rho_m(X)>q$, we have that 
\[\rho_m(Y\cup X)=\rho_m(Y\setminus X)+\rho_m(X)>\alpha-\frac{q}{2}+q=\alpha+\frac{q}{2}\]
Thus $\overline{\rho}(\pi(Y\cup X))\geq \alpha+\frac{q}{2}$, so in particular $\rho(\pi(Y\cup X))\neq\alpha$. Thus $Y\cup X$ does not have intrinsic density $\alpha$.
\end{proof}

Applying Lemma~\ref{lemma:alpha} by letting $X$ be as in Theorem~\ref{thm:disorder-stronger-order}, we get:
\begin{lemma}
    Given $\alpha\in[0,1]$, there exists a set of computable density $\alpha$ which is not intrinsically dense.
\end{lemma}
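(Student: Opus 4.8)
The plan is to obtain this statement as an immediate corollary of Lemma~\ref{lemma:alpha} together with Theorem~\ref{thm:disorder-stronger-order}. Lemma~\ref{lemma:alpha} already does the real work of converting a single witness to the separation at density $0$ into a witness at an arbitrary $\alpha$, so all that remains is to supply its hypothesis and then read off the conclusion.

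First I would invoke Theorem~\ref{thm:disorder-stronger-order} to fix a set $X$ that is computably small, i.e. $R(X)=0$, yet not intrinsically small. This is exactly the hypothesis ``there is $X$ such that $X$ is computably small but not intrinsically small'' demanded by Lemma~\ref{lemma:alpha}. Feeding this $X$ and the given $\alpha$ into the lemma produces a set $Z$ with $R(Z)=\alpha$ whose intrinsic density is not $\alpha$, which is precisely the claim.

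For the degenerate endpoint $\alpha=0$ there is nothing to do beyond Theorem~\ref{thm:disorder-stronger-order} itself: the set $X$ already has $R(X)=0$ while failing to be intrinsically small, so one may simply take $Z=X$. For $\alpha\in(0,1]$ the content lives entirely inside Lemma~\ref{lemma:alpha}, whose argument mixes $X$ into a reference set of computable density $\alpha$ and shows that at least one of $Y\setminus X$ and $Y\cup X$ must fail to have intrinsic density $\alpha$.

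The only point that deserves care is the implicit input to Lemma~\ref{lemma:alpha}: its proof begins by choosing some $Y$ with $R(Y)=\alpha$, so one must know that a set of computable density $\alpha$ exists for every $\alpha\in[0,1]$. The cleanest way to secure this is to take $Y$ of intrinsic density $\alpha$---such sets exist for all $\alpha$ by Astor's construction \cite{intrinsicdensity}---and to recall, as noted in the excerpt, that intrinsic density $\alpha$ forces $R(Y)=\alpha$. With that existence in hand the result is a direct appeal to the two prior results, so I do not anticipate any genuine obstacle here; the substantive difficulty has already been discharged in Theorem~\ref{thm:disorder-stronger-order} and Lemma~\ref{lemma:alpha}.
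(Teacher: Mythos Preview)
Your proposal is correct and matches the paper's own argument exactly: the paper simply states that the lemma follows by applying Lemma~\ref{lemma:alpha} with the set $X$ supplied by Theorem~\ref{thm:disorder-stronger-order}. Your additional remarks about the $\alpha=0$ endpoint and about securing a $Y$ with $R(Y)=\alpha$ are reasonable clarifications, but the paper treats both as implicit and gives no further detail.
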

\section{Weak stochasticity} \label{sec:disorder-not-stronger-than-weak-adaptable}

Having defined weak stochasticity above, we now prove that it is separate from Church (and therefore MWC) stochasticity, and that it does not imply intrinsic density.

\begin{theorem}
    \label{thm:weak-non-church}
    There is a set $A$ which is 0-weakly stochastic but not 0-Church stochastic.
\end{theorem}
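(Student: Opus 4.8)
The plan is to build $\A \leq_{\mathrm{T}} \emptyset'$ as a concatenation of finite \emph{blocks} $B_0, B_1, \ldots$, each containing exactly two $1$'s placed in a ``mirrored'' pattern that one fixed Church rule can read for free but that no skip rule can cheaply exploit. I would write $B_n$ as a \emph{signal half} $S_n$ followed by a \emph{payload half} $P_n$, each of length $M_n$ (so $|B_n| = 2M_n$), with the layout and the lengths $M_n$ fixed by a \emph{computable} formula, say $M_n = 3(n+1)n + 1$. Inside $B_n$ I put a single $1$ at position $p_n$ of $S_n$ and a single $1$ at the mirrored position $p_n$ of $P_n$, and $0$'s everywhere else; only the needle positions $p_n$ will consult $\emptyset'$. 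Non-Church-stochasticity is then immediate: the Church contestant $C$ opens every door in order, and while reading $S_n$ (selecting nothing) it observes $p_n$, so when it reaches door $M_n + p_n$ of $P_n$ it can select exactly that door, whose value is $1$. Since a Church decision may depend on the already-read prefix, $C$ is a legal total computable rule, and $C(\A)$ is the all-ones sequence; hence $\rho(C(\A)) = 1 \neq 0$ and $\A$ is not $0$-Church stochastic.

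The real work is $0$-weak stochasticity, i.e.\ $\rho(\f(\A)) = 0$ for every total computable skip rule. I would fix an enumeration $\f_0, \f_1, \ldots$ of candidate skip rules and, at stage $n$, choose $p_n$ to defeat $\f_0, \ldots, \f_n$ simultaneously in $B_n$. Using $\emptyset'$, simulate each $\f_e$ ($e \leq n$) on the already-decided prefix followed by all-zeros on $B_n$, recording the set $X_e \subseteq B_n$ of the \emph{first} $T_n := n$ doors it opens inside $B_n$ (stopping early if it leaves $B_n$ or diverges). These sets contain at most $(n+1)T_n$ positions, so, counting the payload mirror as well, at most $2(n+1)T_n < M_n$ values of $p_n$ are forbidden, and I can pick $p_n \in [0,M_n)$ whose signal needle and payload mirror both avoid $\bigcup_{e\leq n} X_e$. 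Because neither needle lies among $\f_e$'s first $T_n$ openings in $B_n$, the real run of $\f_e$ agrees with the all-zero simulation through those openings; so within $B_n$ each $\f_e$ either leaves after opening only $0$'s (collecting no $1$) or opens at least $T_n = n$ doors before it can reach a needle (collecting at most the block's two $1$'s).

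For verification I would check that this forces $\rho(\f_e(\A)) = 0$ for each $e$. Across blocks $n \geq e$, $\f_e$ gains at most two $1$'s per block, and it gains a $1$ in block $n$ only after opening $\geq T_n = n$ doors there; so if it collects $1$'s in $j$ of the blocks with index $\leq N$, it has made $\gtrsim \binom{j}{2} \gtrsim j^2$ openings while collecting $\leq 2j + O(1)$ ones, whence the running ratio is $O(1/j) \to 0$. (If it collects $1$'s from only finitely many blocks the ratio tends to $0$ trivially, since every skip rule opens infinitely many doors; and within each block the count of ones rises by at most $2$, so the limsup along arbitrary prefixes matches the block-boundary bound up to lower-order terms.) The same bookkeeping applied to the trivial ``open-every-door'' rule gives $\rho(\A) = 0$, consistent with weak stochasticity, and $\A \leq_{\mathrm{T}} \emptyset'$ because the block layout is computable and each $p_n$ is obtained by running the finitely many simulations under $\emptyset'$.

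The main obstacle is the tension between the two demands on the block structure: it must be \emph{fully transparent} to the single fixed Church rule, which reads the signal at no cost, yet \emph{opaque} to every adaptive skip rule, which can only learn where the needles sit by paying in selected $0$'s. Arranging a single choice of $p_n$ that neutralizes \emph{all} of $\f_0, \ldots, \f_n$ at once — rather than one rule per block — is what makes each $\f_e$ suffer in \emph{cofinitely} many blocks, which is precisely what a density limit (as opposed to a limsup over a sparse set of blocks) requires. Balancing the needle-avoidance count $2(n+1)T_n < M_n$ against the ``many-openings-per-collected-one'' bound, so that the adversary's every route to a $1$ is expensive while the Church rule's route stays free, is the delicate part that the construction must get right.
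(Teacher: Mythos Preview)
Your construction is correct and takes a genuinely different route from the paper's. The paper does not build the witness directly at all: it observes that if $A$ is $0$-weakly stochastic then so is $A\oplus A$ (by contrapositive --- a skip rule $f$ witnessing $\overline\rho(f(A\oplus A))>0$ yields a skip rule $g$ on $A$ by collapsing each pair $2k,2k+1$ to $k$, with $\overline\rho(g(A))\geq\tfrac{1}{2}\overline\rho(f(A\oplus A))$), and then invokes the standard fact that $A\oplus A$ is never Church stochastic for infinite co-infinite $A$ (open the evens for free, select odd $2k+1$ iff the preceding even showed a $1$). So the paper's witness is $A\oplus A$ for \emph{any} pre-existing $0$-weakly stochastic $A$, and the entire proof is a few lines.

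Your approach is longer but fully self-contained: you do not assume the existence of a weakly stochastic set, and you produce an explicit $A\leq_{\mathrm T}\emptyset'$ together with the explicit Church selector that defeats it. The signal/payload mechanism is in fact the same device the paper is implicitly exploiting --- in $A\oplus A$ the even coordinates are the signal and the odd coordinates the perfectly mirrored payload --- but you replace the appeal to a generic weakly stochastic $A$ by a direct diagonalization that hides the needle from the first $n$ probes of each of $f_0,\dots,f_n$. Your verification that $\rho(f_e(A))=0$ (at least $n$ zeros in $B_n$ before any one can be collected, hence $\Omega(j^2)$ openings for $O(j)$ ones) is sound; the informal $\gtrsim$ bounds can be made precise exactly as you indicate. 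The paper's route buys brevity and isolates a reusable closure property of weak stochasticity; yours buys a concrete from-scratch witness and an explicit complexity bound.
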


\begin{proof}
    Let $A$ be 0-weakly stochastic. Then we claim $A\oplus A$ is also 0-weakly stochastic, which we shall prove by contrapositive. Suppose that $A\oplus A$ is not 0-weakly stochastic. Then we shall show that $A$ is not either. Let $f$ be the skip rule for which $\overline{\rho}(f(A\oplus A))>0$. Then define the skip rule $g$ to select $k$ on the skip sequence $\sigma$ whenever $f$ selects $2k$ or $2k+1$ on $\gamma$ (doing nothing if $f$ attempts to select $2k+1$ following $2k$), where $\gamma$ is the skip sequence generated by $f$ on $\sigma\oplus\sigma$.
    \\
    \\
    Then $\overline{\rho}(g(A))\geq \frac{\overline{\rho}(f(A\oplus A)}{2}>0$: for each bit selected by $g(A)$, there are either one or two copies of the same bit in $f(A\oplus A)$. Thus $A$ is not $0$-weakly stochastic as desired.
    \\
    \\
    Finally, it is straightforward to prove that $A\oplus A$ is not Church stochastic for any infinite, co-infinite $A$. For example, see \cite{thesis} Proposition 3.23.
\end{proof}

Note that this proof only works for weak stochasticity $0$: if $A$ is $r$-weakly stochastic for positive $r$, then the skip rule which selects every even bit at index $2n$ and selects the odd bit at index $2n+1$ only if the former is a $1$, selects a sequence of density $\frac{2r}{r+1}$. In particular, for $0<r<1$, this gives us an example of a set which has intrinsic density $r$ but is not $r$-weakly stochastic. The following proof extends this to include $r=0$:
\\
\par
\begin{theorem}
    There is an intrinsically small set $X$ which is not $0$-weakly stochastic. 
\end{theorem}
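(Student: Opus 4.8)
We need to construct an intrinsically small set $X$ — one for which every injective (indeed every permutation) computable rearrangement still yields density $0$ — which nonetheless fails to be $0$-weakly stochastic, meaning there is a single computable skip rule $f$ with $\overline{\rho}(f(X))>0$.

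The plan is to exploit the asymmetry between the two classes of strategies. Skip rules are \emph{orderly} in the first coordinate: a skip rule must always jump to a door strictly larger than every door it has examined so far (the constraint $f(\sigma)>\sigma(|\sigma|-1)_0$). By contrast, intrinsic smallness quantifies over all injections/permutations, which may revisit small indices after large ones. So I would look for a set whose ``car-rich'' positions are arranged in a pattern that an orderly, forward-only selector can lock onto, but which is so sparse and spread out that no reordering can accumulate positive density. The most direct route is to reuse the machinery already built in Section~\ref{sec:disorder-stronger-order}: Theorem~\ref{thm:disorder-stronger-order} gives a set $X$ that is computably small but not intrinsically small, witnessed by a \emph{disorderly} contestant $\h$. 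I would run the construction in its ``dual'' form, swapping the roles of the two strategy classes so that the disorderly contestant $\h$ is replaced by a \emph{skip rule} and the orderly opponents $\f_e$ are replaced by the injections/permutations. Concretely, the positive requirements now demand that $X$ defeat every computable injection (so that $X$ is intrinsically small, i.e. $\overline{\rho}(\pi(X))=0$ for all computable permutations $\pi$), while a single global requirement builds one computable skip rule $f$ that keeps selecting cars often enough that $\overline{\rho}(f(X))\geq 1$.

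The key steps, in order, are as follows. First, I would set up the requirement list: a global requirement $\G_s$ analogous to~\eqref{eq:G} asserting that the skip rule $f$ harvests cars on a density-$1$ subsequence infinitely often, and positive requirements $\Pos_{\pi_e,s}$ analogous to~\eqref{eq:Pfs} asserting that each computable injection $\pi_e$ receives asymptotically few cars. Second, I would adapt the disordered-block construction: the skip rule $f$ plays the role formerly played by $\h$, alternating between opening doors sparsely and returning to fill gaps — except now $f$ must respect the orderly (strictly increasing first-coordinate) constraint on skip sequences. This is the crucial structural change, and where I would focus the argument. Third, I would verify, using the same largeness-condition bookkeeping as in Section~\ref{sec:verifyr}, that each block contributes enough cars to $f$'s selected subsequence while forcing goats onto the doors visited by the finitely many injections under consideration at that stage.

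The main obstacle is the orderliness constraint on $f$. In the original construction the power of $\h$ came precisely from being \emph{disorderly} — it could pause its fast growth and return to fill earlier gaps, a move the orderly opponents could not copy. Here $f$ must instead be an orderly skip rule, so it \emph{cannot} return to smaller doors. The subtlety is that a skip rule nevertheless observes what is behind each door before choosing the next (it is adaptive), whereas injections are non-adaptive and commit to their selection blindly. I expect the construction to hinge on this adaptivity: $f$ should read the bits $X$ reveals and, within each block, steer toward whichever stretch of consecutive indices $X$ has loaded with cars, following the forward-only increasing pattern that $X$ is built to supply. The delicate point will be arranging $X$ so that a single forward-scanning, bit-reading rule can always find such a car-rich run, while simultaneously guaranteeing that every \emph{injection} — able to cherry-pick indices out of order but forced to choose them without seeing the bits — is diluted by the goats deliberately placed on the doors it visits. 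Balancing these two demands block-by-block, and checking that the car-placement for the skip rule never inadvertently hands positive density to some injection, is the heart of the proof.
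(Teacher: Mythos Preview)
Your high-level framing is right --- you want one adaptive, orderly selector to succeed while every non-adaptive, disorderly selector fails --- but the proposed route of ``dualizing'' the Section~\ref{sec:disorder-stronger-order} construction has a structural gap that you have not closed.

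The disordered-block machinery of Section~\ref{sec:disorder-stronger-order} is built around a dichotomy (sparse vs.\ dense) that only makes sense because the opponents there are \emph{increasing}. The requirement $\Pos_{\f,s}$ is satisfied by the argument ``since $\f$ never returns to smaller doors, wait for $\f$ to pass enough doors beyond $\A(t)$''; the whole outcome analysis in $\oc$ counts how many doors $\f$ opens \emph{in each consecutive sub-block}, which presupposes $\f$ traverses sub-blocks in order. When you swap in permutations $\pi_e$ as the opponents, none of this survives: a permutation can scatter its early time-stamps across arbitrarily many sub-blocks, so there is no well-defined ``$\pi$ is sparse at $\SB$'' in the sense the recursion needs, and no way to wait $\pi$ out. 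Your proposal acknowledges the orderliness obstacle for the skip rule but not this mirror-image obstacle for the opponents, and it is the latter that breaks the recursion in $\cgr$.

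The paper's proof takes a different and more direct route. It fixes an explicit block structure (the $n$-th block has $2^n$ sub-blocks each of length $2^n$) and a single concrete skip rule: in each block, probe the first bit of every sub-block; upon seeing a $1$, take that entire sub-block. Any $X$ containing one full sub-block per block infinitely often is then automatically not $0$-weakly stochastic (Lemma~\ref{lemma:infblock}). The real work is a counting lemma (Lemma~\ref{lemma:finblock}): for any permutation of $[0,4^n)$, only $O(n^2)$ of the $2^n$ sub-blocks can ever have image-density exceeding $1/n$ at some initial segment. Since $2^n$ outgrows $(s{+}2)\cdot O(n^2)$, one can always choose a sub-block that is simultaneously ``small'' for the first $s{+}2$ computable permutations, and a $\emptyset''$-construction strings these choices together. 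This counting argument is the missing idea in your outline; nothing in the $\DB$ framework supplies it, and without it you cannot control disorderly opponents.
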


\begin{proof}
    We shall prove that the intrinsic density of $X$ is $0$ using the equivalent definition given by Astor \cite{intrinsicdensity}: $X$ is intrinsically small if and only if $\rho(\pi(X))=0$ for every computable permutation $\pi$.
    \\
    \par
    Given $n$, let $b_n=\frac{4^n-1}{3}$, which is the sum of the first $n$ powers of $4$. Define the $n$-th ordered block to be $[b_n,b_{n+1})$. The $i$-th sub-block of the $n$-th (ordered) block is $[b_n+i2^n,b_n+(i+1)2^n)$ for $0\leq i<2^n$. In other words, the $n$-th block has length $4^n$, and each of its $2^n$ sub-blocks has length $2^n$.

    \begin{lemma}
    \label{lemma:infblock}
        Let $A$ be any set which, for infinitely many $n$, contains exactly one sub-block of the $n$-th ordered block. Then $A$ is not $0$-weakly stochastic.
    \end{lemma}    

    \begin{proof}
        Consider the skip rule $f$ which goes block by block and selects bits of $A$ in the $n$-th block by checking the first bit of each sub-block. If it finds a $1$ in the $i$-th block, then it selects the entire $i$-th sub-block, then proceeds to the next block. If it finds a $0$ in every sub-block, then it proceeds to the next block.
        \\
        \\
        Note that in the $n$-th block, this skip rule selects at most $2^n$ 0's and selects $2^n$ 1's whenever it finds a 1. So if $A$ contains a sub-block at level $n$, then the number of selected $0$'s is at most $2(2^{n}-1)$: 
        \[\mathop{\sum}_{i=0}^n 2^i=2^n-1\]
        as the sum of the maximum possible number of 0s selected from the previous blocks, and $2^n-1$ 0's as the maximum number of first bits in each sub-block of the $n$-th block. It selects at least $2^n$ 1's (possibly more from previous sub-blocks), so if $s=3\cdot 2^n-2$, we have
        \[\rho_s(f(A))=\frac{|f(A)\upharpoonright s|}{s}\geq\frac{2^n}{s}\geq \frac{2^n}{3\cdot 2^n}=\frac{1}{3}\]
        Thus, whenever $A$ contains exactly one sub-block of the $n$-th ordered block, the density of $f(A)$ is at least $\frac{1}{3}$. Therefore, $\overline{\rho}(f(A))\geq\frac{1}{3}>0$, so $A$ is not $0$-weakly stochastic.
    \end{proof}

    Therefore, it suffices to prove that there is some set $X$ satisfying the requirements of this lemma which is also intrinsically small. We shall construct $X$ using $\emptyset''$ as an oracle to enumerate all of the computable permutations $\pi_i$. (The set of $e$ such that $\varphi_e$ is a permutation is a $\Pi_2^0$ index set.) The following technical lemma will allow us to ensure that the density under a given permutation is small.

    \begin{lemma}
    \label{lemma:finblock}
        Let $\pi$ be a permutation of $[0,4^n)$. Define the $i$-th sub-block to be $[i2^n,(i+1)2^n)$. Then the number of sub-blocks $X$ for which there is an $s\in [1,4^n]$ with 
        \[\rho_s(\pi(X))>\frac{1}{n}\]
        is at most $O(n^2)$.
    \end{lemma}

    \begin{proof}
        We say a sub-block is \emph{big} if there is such an $s$. Each big sub-block has a minimal witness $s$, i.e. 
        \[\rho_s(\pi(X))=\frac{|\pi(X)\upharpoonright s|}{s}>\frac{1}{n}\]
        but
        \[\rho_t(\pi(X))=\frac{|\pi(X)\upharpoonright t|}{t}\leq\frac{1}{n}\]
        for all $t<s$. If $|\pi(X)\upharpoonright s|=k$, we say $X$ is $k$-big. 
        \\
        \\
        We shall count the maximum number of $k$-big sub-blocks we can have. If
        \[\rho_s(\pi(X))=\frac{|\pi(X)\upharpoonright s|}{s}>\frac{1}{n}\]
        then we have
        \[n|\pi(X)\upharpoonright s|=nk>s\]
        In other words, a $k$-big sub-block must have minimal $s<nk$. So there can be at most $n$ such sub-blocks, as the sub-block images are disjoint and we need $k$ elements in each one.
        \\
        \\
        However, the maximum number of $k$-big sub-blocks cannot be achieved for each $k$ simultaneously: If the maximum number of 1-big sub-blocks is achieved, then each element of $[0,n)$ is in a different sub-block's image. However, each 2-big sub-block needs to contain two elements of $[0,2n)$,  but the only remaining elements are $[n,2n)$ as $[0,n)$ has been exhausted by the 1-big sub-blocks. Therefore, there can be at most $\frac{n}{2}$ 2-big sub-blocks when there are $n$ 1-big sub-blocks.
        \\
        \\
        In general, the maximum number of big sub-blocks occurs when we maximize the number of $k$-big sub-blocks given that the number has been maximized for all smaller $k$: the maximum number of big sub-blocks occurs when as few elements as possible are used to make them big. Thus, the number of $k$-big sub-blocks in this context is at most $\frac{n}{k}$, as all elements of $[0,(k-1)n)$ have been exhausted by the sub-blocks which are big for numbers smaller than $k$, and thus each $k$-big sub-block needs $k$ elements from $[(k-1)n,kn)$.
        \\
        \\
        As each sub-block contains $2^n$ elements, the largest $k$ for which $X$ can be $k$-big is $2^n$. Thus the number of big sub-blocks is at most
        \[\mathop{\sum}_{k=1}^{2^n}\frac{n}{k}=n\mathop{\sum}_{k=1}^{2^n}\frac{1}{k}\]
        By the Euler-Maclaurin formula, the harmonic numbers are $O(\log n)$, so the subsequence indexed by powers of 2 is $O(n)$. Thus the above is $O(n^2)$.
    \end{proof}

    In general, we cannot rely on arbitrary computable permutations to nicely map finite blocks to finite blocks. However, observe the following: Consider $\pi$ a permutation of $\omega$. Then we can define the permutation $\hat{\pi}:[0,4^n)\to[0,4^n)$ via removing $\pi^{-1}([0,4^n))\cap [4^n,\infty)$ and shifting everything down to fill the gaps. I.e.,
    \[\hat{\pi}(i)=\pi(i)-|[0,\pi(i))\cap\pi([4^n,\infty))|\]
    Then for any $s$,
    \[\rho_s(\hat{\pi}(X))\geq\rho_s(\pi(X))\]
    since $\hat{\pi}(X)$ contains elements which are pairwise no larger than elements of $\pi(X)$, so the number of them which are smaller than a given $s$ is no larger. Thus a non-big block exists for $\pi$ whenever there is a non-big block for $\hat{\pi}$.
    \\
    \\
    We now describe the construction of $X$: At stage $0$, choose a sub-block $B_0$ of some block such that $\rho_s(\pi_0(B_0))\leq\frac{1}{2}$ for all $s$, which exists by Lemma \ref{lemma:finblock} since for $\hat{\pi_0}$ as described in the previous paragraph, the number of sub-blocks in the $n$-th block is $O(2^n)$, but the number of big sub-blocks is $O(n^2)$. Thus there will eventually be more sub-blocks than big sub-blocks in the $n_0$-th block, and there will be a sub-block which is not big for $\hat{\pi_0}$, and therefore also not big for $\pi_0$ by the previous paragraph. Choose the first such sub-block $B_0$, and set $B_0=X_0$. Set our restraint $\sigma_0=\max(\pi_0^{-1}([0,b_{n_0+1}))+1$.
    \\
    \\
    At stage $s+1$, search for a sub-block $B_{s+1}$ of the $n_{s+1}$-th block which satisfies the following:
    \begin{itemize}
        \item $\pi_i([b_{n_{s+1}},b_{n_{s+1}+1})>\sigma_s$ for all $0\leq i\leq s+1$, i.e. none of the first $s+2$ computable permutations map anything in the $n_{s+1}$-th block below our restraint from the previous stage.
        \item $\rho_t(\pi_i(X_{s}\cup B_{s+1}))\leq\frac{1}{s+2}$ for all $t>\sigma_s+1$
    \end{itemize}
    Such a block and sub-block exist due to Lemma \ref{lemma:finblock}: since the number of sub-blocks of the $n$-th block which are big for a single permutation is $O(n^2)$, the number of sub-blocks which are big for some permutation in a list of $s+2$ permutations is larger by at most a factor of $s+2$: the worst case is when each permutation has a disjoint collection of sub-blocks which are big. However, the number of sub-blocks of the $n$-th block is $2^n$, so eventually there is some $n$ for which there is such a sub-block as this grows much faster than $(s+2)O(n^2)=O(n^2)$. Set $X_{s+1}=X_s\cup B_{s+1}$ and \[\sigma_{s+1}=\max_{i\leq s+1}(\max(\pi_i^{-1}([0,b_{n_{s+1}+1})))+1\]
    Proceed to the next stage.
    \\
    \\
    Finally, let $X=\bigcup_{s\in\omega} X_s$. First, notice that $X$ satisfies the requirements of Lemma \ref{lemma:infblock}, as at each stage a sub-block is added to $X$. Furthermore, $\rho_t(\pi_i(X))\leq\frac{1}{s+2}$ whenever $\sigma_s<t\leq\sigma_{s+1}$ for all $i\leq s+1$, so $\overline{\rho}(\pi_i(X))=0$ for all $i$. Thus $X$ is intrinsically small as desired.
\end{proof}

\section{Weak-adaptability is not stronger than disorderliness} \label{sec:weak-not-stronger-than-disorder}
By strengthening our construction from Theorem~\ref{thm:disorder-stronger-order}, we show that 0-weakly stochastic does not imply intrinsic smallness:
\begin{theorem} \label{thm:weak-not-stronger-than-disorder}
    A host $\A$ that beats all weakly-adaptive, orderly contestants $\g\leq_{\T}\emptyset$ may not beat all disorderly, non-adaptive contestants $\h\leq_{\T}\emptyset$. Furthermore, we shall construct such an $A$ that is not Church stochastic.
\end{theorem}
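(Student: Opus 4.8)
The plan is to reuse the disordered-block machinery of Theorem~\ref{thm:disorder-stronger-order} essentially verbatim on the disorderly side: $\h$ is again the computable concatenation of increasingly-nested blocks produced by $\con$, and $\G_s$ is met exactly as before, giving $\overline{\rho}(\h^{-1}(\A))=1$, so $\A$ is not intrinsically small. What changes is the opponent side. The opponents become an enumeration $\f_0,\f_1,\dots$ of the partial computable skip rules, and $\Pos_{\f,s}$ is reinterpreted as: eventually any two consecutive cars in the selection $\f(\A)$ are separated by at least $s$ goats, so that $\rho(\f(\A))=0$ and $\A$ is $0$-weakly stochastic. The essential new feature is that a skip rule is adaptive, so the set $\Image(\f)$ of doors it opens is no longer fixed but depends on $\A$. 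Since skip rules are orderly, the run of $\f$ on any $\A$ is determined by one forward pass, and $\f\restriction\DB$ can be computed (with the oracle, which must now also decide totality of the $\f_e$) by simulating that pass; throughout, every occurrence of $\Image(\f)$ in $\oc$ and $\cgr$ is replaced by the doors $\f$ opens on the placement we commit to.

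The one place the non-adaptive argument breaks is the dense branch, where $\cgone$ and $\cgr$ put cars on $\IF\ll\DB$. A skip rule that opens densely when it sees goats can, upon opening a car at a door of $\IF$, abandon the following sub-block and race to the next door of $\IF$; chaining this, it would collect the cars of $\IF$ with no intervening goats, reaching density $1$. The fix is to classify the outcome of $\f$ not on the all-goats placement but on the placement $\A_0$ that already carries cars on $\IF$ and goats in every sub-block, and, when $\f$ is sparse, to commit only a \emph{prefix} of those cars. Concretely, simulate $\f$ on $\A_0$ and let $\SB_j\subblock\DB$ be the \emph{first} sub-block in which $\f$ opens fewer than $s$ doors. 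If there is none, then $\f$ is genuinely dense even in the presence of the $\IF$-cars, and the original dense strategy is safe: between successive doors of $\IF$ that $\f$ opens it is forced through at least $s$ goats, so $\rho(\f(\A))\le 1/s$. Otherwise we keep as cars exactly the doors of $\IF$ up to and including the one bounding $\SB_j$, fill every later door of $\IF$ with goats, and recurse into $\SB_j$ as in $\cgr$, passing the fewer than $s$ doors $\f$ opens inside $\SB_j$ as goat-restraints. Because $\A$ is now frozen on every door up to $\SB_j$, the run of $\f$ up to $\SB_j$ is unchanged, so $\f$ still skips $\SB_j$ and never sees the cars the recursion places there; and since $\SB_j$ was the \emph{first} sparse sub-block, $\f$ opened at least $s$ goats in each earlier sub-block, so the at most $j+1$ cars it does collect on $\IF$ are themselves separated by at least $s$ goats. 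A chasing opponent is thereby converted into one that is removed from $\DB$ leaving only a density-$0$ trace, exactly like a sparse opponent in the original proof.

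With this modified outcome, the multi-opponent construction is the nested recursion of Sections~\ref{sec:construct-db2}--\ref{sec:construct-dbr} applied level by level to the current $\IF$-cars placement rather than the all-goats one: at each level we either find a surviving opponent that is sparse in the above sense, remove it by committing the truncated prefix of $\IF$-cars and recursing into its first sparse sub-block, or else all survivors are dense and we place cars on the current $\IF$ and terminate. The largeness condition is unchanged and still absorbs the at most $rs\le s^2\le s^3$ goats forced across the at most $r$ levels, so $\h$ gets its burst of cars in the terminating component and $\G_s$ holds. On the $\Pos$ side, each opponent is either removed at some level, where the previous paragraph gives it at most one car per $s$ goats, or survives to the terminating component, where it is dense and collects cars only on that component's $\IF$, again one per $s$ goats; either way $\rho(\f(\A))=0$.

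The main obstacle is precisely the adaptivity powering the chasing attack, and the device above --- classifying on the $\IF$-cars placement, truncating the $\IF$-cars at the first sparse sub-block, and freezing $\A$ below it --- is what neutralizes it. The most delicate point in a full proof is bookkeeping the simultaneous runs of all surviving opponents as $\A$ is built: one must check that truncating the $\IF$-cars after $\SB_j$ (which touches only doors lying beyond $\SB_j$) leaves intact the portion of every other opponent's run that supplies its separating goats, and that the freezing used to pin one opponent's skip never conflicts with the goat-restraints and car-placements demanded by the others. Since every commitment lies on an initial segment of the door ordering and each recursive step strictly shrinks the opponent set, these runs are computed by a single oracle-effective forward sweep per block, and the verification collapses to the same largeness estimate as in Section~\ref{sec:verifyr}.
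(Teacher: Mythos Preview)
Your approach captures the right instinct---classify adaptive opponents on the placement that already carries cars on $\IF$, commit a prefix of those cars, and recurse---and for a single opponent it does neutralize the chasing attack. The gap is that the same attack resurfaces at every recursion boundary in the multi-opponent case, and your restraints do not block it there. After you commit cars on $\IF_l(0),\ldots,\IF_l(j_l)$ and recurse into $\SB^l_{j_l}$, the first door of that sub-block is $\IF_{l-1}(0)$, and in door-order it sits immediately after $\IF_l(j_l)$. A surviving opponent $g$ can open $\IF_l(j_l)$ (a car) and then $\IF_{l-1}(0)$ (a car, once the inner recursion commits it) with zero intervening goats; if $j_l=0$ at every level this cascades into a run of consecutive cars as long as the recursion depth, and your ``one car per $s$ goats'' bound fails. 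The fact that $g$ was dense at $\SB^l_{j_l}$ on your test placement $A_0$ is no help: that placement had $\SB^l_{j_l}$ filled with goats, and once cars appear inside it $g$'s run through the sub-block changes. You restrain only the \emph{removed} opponent's doors in $\SB^l_{j_l}$, which does nothing for $g$. This is precisely the ``delicate point'' you flag in your last paragraph, but it does not in fact work out.

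The paper closes exactly this hole: at each recursion step it forces to goats the next $s$ doors opened by \emph{every} contestant still in play (the second union in line~\ref{l:sparse-goats} of $\cg$), computed on the cars committed so far; because those doors are then goats, each surviving $g$'s run through them is unchanged on the final $A$, so $g$ sees at least $s$ goats between the last outer-level car and whatever the recursion places inside. The paper also searches over all $X\subseteq\IF-\goats$ rather than only the full-cars placement, and selects a \emph{minimal} witness $(X,\SB)$; that minimality is what guarantees every contestant is dense at every $\SB'<\SB$ under the placement actually committed, not merely under $A_0$. Your prefix-only scheme may be salvageable once the all-opponent boundary restraint is added, but as written the $\Pos$ side of the verification does not go through.
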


The $\h$ that $\A$ loses to is the same one from Equation~\eqref{eq:concat-h-all}, which is a concatenation of increasingly nested disordered blocks. To get $\A$, we construct $\A_{s}\supseteq \A_{s-1}$ at stage $s$ so that
\[\A=\bigcup_{s\in\omega} \A_s.\]
We ensure that $\A_s$ is correct up to the $\n_s$-th disordered block $\DB^{\n_s}$:
\[\A\restriction[0,\ceil{\DB^{\n_s}}] =\A_s\restriction[0,\ceil{\DB^{\n_s}}],\]
where $\n_s$ can be obtained using $\emptyset'$ as an oracle.
\begin{lemma}
    We can ensure in Theorem~\ref{thm:weak-not-stronger-than-disorder} that $\A\leq_{\T}\emptyset'$.
\end{lemma}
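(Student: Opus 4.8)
The plan is to mirror Lemma~\ref{lemma:below-jump}: since the contestant $\h$ of Equation~\eqref{eq:concat-h-all} and its block decomposition $\{\DB^l\}_{l\in\omega}$ are produced by a purely computable procedure, the map $l\mapsto\ceil{\DB^l}$ is a computable strictly increasing function and membership of a door in a given (sub-)block is computable. Hence all oracle use is confined to the construction of $\A$, and it is enough to show two things: that the stage-$s$ data $(\n_s,\ \A_s\restriction[0,\ceil{\DB^{\n_s}}])$ is uniformly computable from $\emptyset'$, and that $\lim_s\n_s=\infty$. Granting both, the reduction is routine: to decide $\A(m)$ from $\emptyset'$, compute $\n_0,\n_1,\dots$ until the first $s$ with $\ceil{\DB^{\n_s}}\geq m$ (which exists since $\n_s\to\infty$ and $\ceil{\DB^l}\to\infty$), and output $\A_s(m)$; the correctness-up-to-$\n_s$ guarantee gives $\A_s(m)=\A(m)$, and monotonicity $\A_s\subseteq\A_{s+1}$ makes this value stable.

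First I would check that each stage is an $\emptyset'$-effective procedure. The only facts about an opponent $\g$ that the stage consults are the analogues of the outcome computed in Algorithm~\ref{alg:outcome1}: which doors of the relevant (sub-)blocks $\g$ opens, and whether this makes $\g$ sparse or dense. The hard part will be that $\g$ is now adaptive, so its trajectory depends on $\A$; naively this threatens a circularity, since the values of $\A$ on the very block being finalized are what we are deciding. This is resolved by the orderliness of the skip rules: $\g$ reads doors in strictly increasing order, so within the block the construction can decide each door's value just before $\g$ observes it, resolving $\g$'s trajectory and $\A\restriction\DB^{\n_s}$ together in one increasing sweep. Once this finite prefix of $\A$ is committed, every further interrogation ``does $\g$ open door $d$?'' is a computation against a fixed finite oracle, hence a $\Sigma^0_1$ question answerable by $\emptyset'$. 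As each block is finite and each stage treats finitely many opponents, stage $s$ asks only finitely many such questions and thereby outputs $\n_s$ and $\A_s\restriction[0,\ceil{\DB^{\n_s}}]$ using $\emptyset'$ alone.

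Finally I would verify $\lim_s\n_s=\infty$. The correctness level is nondecreasing, and each disordered block is finalized at some stage and never subsequently revised, so $\n_s$ eventually exceeds every bound. Consequently every door $m$ eventually satisfies $m\leq\ceil{\DB^{\n_s}}$ and is frozen at its final value, which is exactly what the reduction above needs. The only genuinely delicate point is the adaptivity obstacle flagged above; the remaining bookkeeping is identical to the non-adaptive construction behind Lemma~\ref{lemma:below-jump}.
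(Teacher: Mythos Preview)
Your overall scaffold is right and matches what the paper leaves implicit: $\h$ and the block data are computable, the stagewise construction of $\A$ is $\emptyset'$-effective, $\n_s\to\infty$, and the reduction follows. The paper does not give a standalone proof of this lemma; it simply points at the construction in Section~\ref{sec:overall}, so your level of detail already exceeds the paper's.

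Where you go wrong is in the one step that actually carries content: your resolution of the adaptivity circularity. You write that ``the construction can decide each door's value just before $\g$ observes it, resolving $\g$'s trajectory and $\A\restriction\DB^{\n_s}$ together in one increasing sweep.'' That is not how the construction works, and an online left-to-right sweep could not implement it. Algorithm~$\outcome$ (and hence $\cg$) does not commit doors as $\g$ reaches them; it first enumerates \emph{all} hypothetical placements $X\subseteq\IF-\goats$ and, for each one, simulates every $\g\in\F$ through the block to find the sparse witnesses, and only then picks the \emph{minimal} $(X,\SB)$. The choice of $X$ depends globally on how all opponents behave under all $2^{|\IF-\goats|}$ hypotheses, so it cannot be produced door-by-door in advance of $\g$'s moves (and with several opponents there is not even a single ``increasing sweep'' to follow).

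The correct reason the circularity dissolves is the one built into $\outcome$'s two rules: under a fixed hypothesis $X$, every door $\leq\ceil{\DB}$ has a determined content ($\IF$-doors by $X$ and prior $\goats$, sub-block doors by goats, earlier doors by $\A_{s-1}$). Hence simulating an orderly $\g$ on the oracle $\cars\cup X$ until it exits the block is just running a fixed partial computable function on a fixed finite oracle; $\emptyset'$ decides convergence. Since $\IF-\goats$ is finite, the recursion depth is $\leq|\F|$, and $|\F|\leq s+1$, the whole call to $\cg_s$ is a finite tree of such questions, giving the $\emptyset'$-effectiveness you need. Replace your ``increasing sweep'' paragraph with this hypothetical-testing argument and the rest of your proof goes through.
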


Our opponents $\g\leq_{\T}\emptyset$ are the adaptive and orderly contestants given by skip rules. Each is a total function $\g:S \to\omega$, where $S\subseteq (\omega\times 2)^{<\omega}$ is the set of skip sequences. We use $\g(X, x)$ to represent the door selected by following the skip sequence $\g$ on $X$ for $x$-many doors.

In our construction, we shall use an enumeration of partial computable functions which contains but is not limited to the skip rules. We shall instead have functions from $2^{<\omega}\times\omega$ to $\omega$, where functions are included in the construction up to the point where they are observed to violate one of the properties of skip rules.\\

Since skip rules are \emph{orderly}, the next door $\g$ chooses $\g(X,x+1)$ should always exceed the previous one $\g(X,x)$:
\[(\forall X\subset\omega)(\forall x\in\omega)\; [\g(X,x+1)>\g(X,x)],\]
$\g$ should also be \emph{consistent} in the sense that next door chosen $\g(X,x)$ should depend only on the contents $X(\g(X,\restriction x))$ of the earlier doors $\g(X,\restriction x)$, and not on the doors it did not open:
\[(\forall X,Y\subset\omega)(\forall x\in\omega)\; [X(\g(X,\restriction x))=Y(\g(X,\restriction x))] \implies \g(X,x)=\g(Y,x).\]

In our construction, it is convenient to abuse notation and to think of $\g$ as a function
\[\g:2^{<\omega}\to2^{\omega}\]
instead, with
\[\g(X) :=\{\g(X,x): x\in\omega\} \subseteq\omega\]
representing the set of all doors opened by $\g$ if the doors that contain cars are given by $X$.\\
 
Fix an effective enumeration $\g_0,\g_1,\ldots$ of the partial-computable functions which will include all these contestants that $\A$ needs to beat. Note that if $\g_i$ is partial or disorderly or inconsistent, $\emptyset'$ will eventually find that out, allowing us to ignore $\g_i$ for the rest of the construction. The framework of $\A$'s construction is similar to that of Section~\ref{sec:DB-all}. At stage $s$, we work within a disordered block that is sufficiently far away, say the $\n_s$-th one $\DB^{\n_s}$. We consider the outcomes of contestants
\[\F:=\{\g_0,\ldots,\g_s\}\]
in $\DB^{\n_s}$, and fill the doors of this disordered block to get $\A_s\restriction \DB^{\n_s}$.

\subsection{Outcomes}
At stage $s$, we work only within $\DB=\DB^{\n_s}$. Like introduced in Section~\ref{sec:strategy1}, we keep track of the restrictions on cars $\cars$ and goat $\goats$ by adding new elements to these sets. At the beginning of the stage, the starting restrictions can be obtained from the previous stage: $\cars=A_{s-1}$, $\goats=[0,\floor{\DB})-\cars$ (Section~\ref{sec:overall}, step~\ref{s:goats}).\\

Under these restrictions, we need to check if a given contestant $\g$ opens doors \emph{densely} or \emph{sparsely} within $\DB$. Using the same ideas as in Section~\ref{sec:outcome1}'s Algorithm $\oc$, we want $\g$'s outcome to be sparse if $\g$ opens fewer than $s$ many doors within some sub-block $\SB\subblock\DB$. Otherwise, $\g$'s outcome should be dense. Refer to Algorithm~\ref{alg:outcome} $\outcome$ for a formalization, which builds on the ideas from $\oc$.\\

The new complication is that $\g$ opens different doors depending on what they have observed so far. When testing for possible outcomes, we obey the following two rules:
\begin{itemize}
    \item Doors $d$ in $\IF\subblock\DB$ are allowed to contain cars (unless $d\in\goats$) or goats.
    \item Doors $d$ in each $\SB\subblock\DB$ contain goats.
\end{itemize}

As we shall see, we can ensure in our construction that while testing outcomes, the doors $d$ above will not already be forced to contain cars $(d\notin\cars)$. These two rules mean we only need to test no more than $2^{|\IF-\goats|}$ possibilities of new car locations $X\subseteq\IF-\goats$ ($\outcome$, line~\ref{l:X}). Of these possibilities, if some $X$ results in $\g$ opening less than $s$ many doors at some $\SB\subblock\DB$ (line~\ref{l:few-doors}), we say that \emph{$\g$ is sparse in $\DB$ at $(X,\SB)$}. Otherwise, we say that the \emph{$\g$ is is dense in $\DB$}. Thus in Algorithm~\ref{alg:outcome}:
\begin{align*}
    \mathcal{X} =\outcome_{\cars,\goats,s}(\g,\DB),
\end{align*}
the output is the set of witnesses $\mathcal{X}$ (line~\ref{l:witnesses}), where $(X,\SB)\in\mathcal{X}$ means that $\g$ is sparse in $\DB$ at this witness. Then $\g$ has a sparse outcome if and only if $\mathcal{X}$ is non-empty.

\begin{algorithm} \label{alg:outcome}
    \caption{$\mathcal{X}=\outcome_{\cars,\goats,s}(\g,\DB)$\\ \Comment{Finds witnesses $(X,\SB)$ for $\g$'s sparseness, if any}}
    \begin{algorithmic}[1]
        \State $\IF\leftarrow$ the increasing part of $\DB$
        \State $\mathcal{X}\leftarrow\emptyset$  \Comment{Set of witnesses for sparseness}
        \For{\textbf{each} $X\subseteq\IF-\goats$} \Comment{Car locations} \label{l:X}
            \For{\textbf{each} $\SB\subblock\DB$}
                \If{$|g(\cars\cup X)\restriction\SB|<s$} \Comment{Filling $X$ with cars, $\g$ will open $<s$ doors in $\SB$} \label{l:few-doors}
                    \State $\mathcal{X}\leftarrow \mathcal{X}\cup\{(X,\SB)\}$ \Comment{Add new witness} \label{l:witnesses}
                \EndIf
            \EndFor
        \EndFor
        \State \Return $\mathcal{X}$
    \end{algorithmic}
\end{algorithm}

\subsection{Strategy}
How should $\A\restriction\DB$ be constructed given the different possible outcomes from a set of contestants $\F$ within $\DB$? We formalize the strategy in Algorithm~\ref{alg:strategy}:
\[(\cars,\goats) =\cg_s(\DB,\F,\cars,\goats),\]
which updates the restriction on cars $\cars$ and goats $\goats$ within the doors of $\DB$, allowing us to later update $\A_s$ to equal $\cars$ (Section~\ref{sec:overall}, step~\ref{s:A}).\\

If we only have a single opponent $\g$, consider the strategy if $\g$ is sparse in $\DB$, say at $(X,\SB)$. We will want to put cars into $X$ (Algorithm $\cg$, line~\ref{l:sparse-cars}) to hold this sparseness. For the same reason, we will forbid cars from entering any sub-block $\SB'\subblock\DB$ before $\SB$, from doors in $\IF$ that are not in $X$ (first union of line~\ref{l:sparse-goats}), and also from the next $s$ many doors opened by $\g$ starting at $\SB$ (second union of line~\ref{l:sparse-goats}):
\[\prin_{\g(\cars)-[0,\floor{\SB})}\restriction[0,s).\]

Note that some of the doors from the second union may go beyond $\DB$.\\

Now $\g$ may receive cars if they open doors in $X$. But we can keep the proportion received low if witness $(X,\SB)$ is \emph{minimal} in the following sense:
\begin{itemize}
    \item $\SB$ is the earliest sub-block that any contestant can be sparse at (line~\ref{l:minimal-sb})
    \item $|X|$ is minimal amongst all contestants that can be sparse at $\SB$ (line~\ref{l:minimal-X}).
\end{itemize}

We refer to these two as the \emph{minimality conditions} for the witnesses. As we shall elaborate in Section~\ref{sec:verify-adaptive}, minimality will ensure that prior to $\SB$, doors within earlier sub-blocks, which we have just filled with goats (first union of line~\ref{l:sparse-goats}), are opened densely enough to dilute the effect of receiving cars at $X$, even when we consider multiple contestants later. To help $\h$ receive enough cars, as we will verify later, we will keep recursing into $\SB$ (line~\ref{l:recurse}) to fill doors that are still empty.\\

On the other hand, if $\g$ has a dense outcome within $\DB$ (line~\ref{l:dense}), we will want to put the cars only at the doors of $\IF\subblock\DB$, as long as not forbidden by $\goats$ (line~\ref{l:dense-cars}). The \ref{eq:large-condition} of $\IF$ will help $\h$ receive enough cars through the $\IF-\goats$ doors. The denseness of $\g$'s outcome means that no matter how the doors in $\IF-\goats$ are filled, $\g$ would open many doors within each $\SB\subblock\DB$, as long as those sub-blocks are filled with goats. Therefore, by forcing doors outside $\IF$ to contain goats (line~\ref{l:dense-goats}), we can ensure that $\g$ receives a low proportion of cars.\\

Next, consider how we can handle multiple contestants $\F$. We use the same nesting strategy as in Section~\ref{sec:construct-dbr}, where we work within a $\DB$ of nestedness level at least $|\F|$. First, we check the outcome of each contestant. If some contestant has a sparse outcome, we say that \emph{$\F$ is sparse in $\DB$} (line~\ref{l:sparse}), otherwise we say that \emph{$\F$ is dense in $\DB$} (line~\ref{l:dense}). If sparse, we find the minimal witness $(X,\SB)$ (lines~\ref{l:minimal-sb},\ref{l:minimal-X}). After updating restrictions on cars (line~\ref{l:sparse-cars}) and goats (line~\ref{l:sparse-goats}) like described earlier for sparse outcomes, we let $\B$ be the set of contestants that are sparse at $(X,\SB)$ (line~\ref{l:B}). As we shall see in Section~\ref{sec:verify-adaptive}, contestants in $\B$ will no longer be affected by future additions of cars within $\DB$, so we can remove $\B$ from consideration and consider the outcomes of $\F-\B$ within $\SB$ only (line~\ref{l:recurse}).\\

We repeat this process recursively, working with fewer contestants with every recursion and zooming into a sub-block of the previous one to fill its doors. After recursing no more than $|\F|$ many times, we will reach an inner-level disordered block where all remaining contestants, if any, are dense within that disordered block (line~\ref{l:dense}). We update the restrictions from a dense outcome (line~\ref{l:dense-cars}-\ref{l:dense-goats}) and terminate the recursion (line~\ref{l:terminate}), which completes the construction of $\A\restriction\DB$.

\begin{algorithm} \label{alg:strategy}
\caption{$(\cars,\goats) =\cg_s(\DB,\F,\cars,\goats)$\\ \Comment{Fill $\DB$'s doors given opponents $\F$}}
\begin{algorithmic}[1]
    \small
    \State $\mathcal{X}\leftarrow \bigcup_{\g\in\F} \{\mathcal{Y}: (\sparse,\mathcal{Y}) =\outcome_{\cars,\goats,s}(\g,\DB)\}$ \Comment{All sparse witnesses}
    \If{$\mathcal{X}\neq\emptyset$} \Comment{Sparse outcome for some $\g\in\F$} \label{l:sparse}
        \State $\SB\leftarrow \mu(\SB)[(\exists X) (X,\SB)\in\mathcal{X}]$ \label{l:minimal-sb} \Comment{Minimal $\SB$}
        \State $X\leftarrow \mu(X)[(X,\SB)\in\mathcal{X}]$
        \State $X\leftarrow X\cap[0,\floor{\SB})$ \label{l:minimal-X} \Comment{Minimal $X$ given $\SB$}
        \State $\cars\leftarrow \cars\cup X$ \label{l:sparse-cars} \Comment{Update cars}
        \State $\goats\leftarrow \goats\cup(\DB-\SB-\cars)\cup\bigcup_{\g\in\F} \prin_{\g(\cars)-[0,\floor{\SB})}\restriction[0,s)$ \label{l:sparse-goats} \Comment{Update goats}
        \State $\B\leftarrow\{\g\in\F: \g \text{ is sparse at } (X,\SB)\}$ \label{l:B} \Comment{\small{$\B$ is unaffected by recursion}}
        \State \Return $\cg_s(\SB,\F-\B,\cars,\goats)$ \label{l:recurse} \Comment{Remove $\B$ and recurse into $\SB$}
    \Else \Comment{Dense outcome for all $\g\in\F$, or $\F=\emptyset$} \label{l:dense}
        \State $\IF\leftarrow$ the increasing part of $\DB$
        \State $\cars\leftarrow \cars\cup\IF-\goats$ \label{l:dense-cars} \Comment{Update cars}
        \State $\goats\leftarrow \goats\cup\DB-\cars$ \label{l:dense-goats} \Comment{Update goats}
        \State \Return $\cars,\goats$ \Comment{Terminate recursion} \label{l:terminate}
    \EndIf
\end{algorithmic}
\end{algorithm}

\subsection{Constructing $\A$} \label{sec:overall}
Initialize $\A=\cars=\goats=\F=\emptyset$. At stage $s$:
\begin{enumerate}
    \item Consider one more contestant:
    \[\F\leftarrow \F\cup\{\g_s\}.\]
    \item Choose a $\DB$ that is sufficiently far away: \label{s:n}
    \begin{align*}
        \n_s\leftarrow &(\mu\n)[\floor{\DB^n}>\max(\cars,\goats)],\\
        \DB\leftarrow &\DB^{n_s}.
    \end{align*}
    \item Pad doors before the new $\DB$ with goats: \label{s:goats}
    \[\goats\leftarrow[0,\floor{\DB})-\cars.\]
    \item Update restrictions on cars $\cars$ and goats $\goats$: \label{s:update}
    \[(\cars,\goats) \leftarrow \cg_s(\DB,\F,\cars,\goats).\]
    
    $\emptyset'$ will eventually know which $\g\in\F$ is partial or disorderly or inconsistent; we can remove these contestants from future consideration.
    \item Add the new cars from $\DB$ into $A$: \label{s:A}
    \[A_s\leftarrow \cars.\]
    \item Go to stage $s+1$.
\end{enumerate}

\subsection{Verification} \label{sec:verify-adaptive}
Given a total computable orderly contestant $\g=\g_r$, we need to satisfy $\Pos_{\g,s}$ for all $s>r$. It is enough to show that:
\begin{lemma}
    Given arbitrary $t\geq s$, let $\DB$ be the $t$-th disordered block. Then for every car that $\g$ receives in $\DB$, they will next receive at least $t$ many consecutive goats.
\end{lemma}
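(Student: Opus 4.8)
The plan is to fix a total computable orderly contestant $\g=\g_r$ and analyze what happens when we run $\cg_s(\DB,\F,\cars,\goats)$ at the stage where $\DB=\DB^t$ is the $t$-th disordered block, with $t\geq s>r$ so that $\g\in\F$. The recursion bottoms out in some terminating component $\DB'\subseteq\DB$ at line~\ref{l:dense}, and cars are placed only in $\DB'$ (specifically in $\IF\subblock\DB'$ or in the witnessing set $X$ along the recursion path). The key dichotomy, mirroring Section~\ref{sec:verifyr}, is whether $\g$ survives in $\F-\B$ down to the terminating depth or is removed earlier.

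First I would handle the case where $\g$ is removed at some recursion step, i.e.\ $\g\in\B$ for the minimal witness $(X,\SB)$ chosen at that step. By the definition of $\B$ (line~\ref{l:B}) and the goat update (line~\ref{l:sparse-goats}), just before removing $\g$ we forced the next $s$ doors $\g$ opens from $\SB$ onward to be goats, and we forbade cars outside $\SB$ and outside $X$. The consistency and orderliness of $\g$ guarantee that once $\g$ has seen the car-pattern $X$ on the increasing part, its subsequent door choices are determined, so the doors we marked as goats are exactly the ones $\g$ will open next. Since all later cars live strictly inside $\SB$ and $\g$ opens at least $s\geq$ enough goats there, every car $\g$ could have received at $X$ is followed by at least $t$ consecutive goats once we note the sub-block sizes satisfy the~\ref{eq:large-condition}; this is where I would invoke the largeness/nesting bookkeeping to upgrade ``$s$ many goats'' to ``$t$ many goats.''

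Next I would handle the case where $\g$ survives to the terminating component $\DB'$, so $\g$ is \emph{dense} in $\DB'$ (line~\ref{l:dense}). Then cars go only into $\IF\subblock\DB'$ (line~\ref{l:dense-cars}) and every other door of $\DB'$ is a goat (line~\ref{l:dense-goats}). Density means that for whatever car-pattern we place on $\IF-\goats$, the contestant $\g$ opens at least $s$ doors within each sub-block $\SB\subblock\DB'$, and those sub-blocks are entirely goats. Since each car $\g$ might collect sits in $\IF$ and is immediately followed (in $\g$'s own opening order, by consistency) by a full goat-filled sub-block that $\g$ traverses densely, I get the required run of at least $t$ consecutive goats after each car, again using the sub-block lengths from the~\ref{eq:large-condition}.

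The main obstacle I expect is the adaptivity: unlike the non-adaptive setting of Section~\ref{sec:verifyr}, the doors $\g$ opens depend on the contents $X$ we place, so I cannot speak of ``$\Image(\g)$'' as a fixed set. The crux is therefore to show that the witness-based outcome test $\outcome_{\cars,\goats,s}$ correctly predicts $\g$'s behavior \emph{on the actual set $A$ we build}: I must argue that the final car-pattern on $\DB$ agrees, on every door $\g$ inspects before committing, with one of the tested patterns $\cars\cup X$, so that consistency forces $\g$'s real trajectory to coincide with the tested one. This hinges on the minimality conditions (lines~\ref{l:minimal-sb},\ref{l:minimal-X}) ensuring that cars placed at deeper recursion levels lie beyond the doors governing $\g$'s early choices, and on the invariant that tested doors are never already in $\cars$. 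Establishing this agreement carefully—so that ``sparse/dense as tested'' transfers to ``sparse/dense on $A$''—is the heart of the verification; once it holds, the two cases above close the argument by the same counting as in the non-adaptive construction.
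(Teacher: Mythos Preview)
Your two-case split misses a whole class of cars: those that $\g$ picks up at recursion depths \emph{before} its own removal (or before termination, in case~2). At every depth with a sparse outcome, cars are added to that depth's witnessing set $X$ (line~\ref{l:sparse-cars}), and the minimal witness $(X,\SB)$ is chosen over \emph{all} remaining contestants, so it may well belong to some $\g'\neq\g$. At such a depth $\g$ is still in $\F$, may open doors in $X\subseteq\IF$, and collect cars there. Your case~1 only treats the single $X$ at the step where $\g$ itself lands in $\B$; your case~2 explicitly asserts ``cars go only into $\IF\subblock\DB'$,'' which is false---the $X$'s along the recursion path also carry cars, and $\g$ visits them. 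The paper's proof handles these intermediate depths head-on: for a car at $d\in X$ received while $\g$ is still in the list, one uses the minimality of $\SB$ (line~\ref{l:minimal-sb}), not of $X$, to deduce that $\g$ is dense at the sub-block $\SB'$ immediately following $d$ (since $\SB'$ precedes $\SB$), and $\SB'$ is entirely goats by the first union of line~\ref{l:sparse-goats}. The second union then handles the boundary case where $d$ is the last $\IF$-door before $\SB$. This per-depth analysis, iterated inductively over the recursion, is the missing backbone of your argument.

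Separately, the plan to ``upgrade $s$ many goats to $t$ many goats via the \ref{eq:large-condition}'' is aimed at the wrong mechanism. The \ref{eq:large-condition} lower-bounds $|\IF|$ for the sake of $\G_s$; it says nothing about lengths of goat-runs following a car. In the paper the run of goats after each car comes from \emph{density}: $\g$ being dense at the sub-block after the car forces it to open enough goat-doors there. Replace the largeness appeal with the minimality-of-$\SB$ argument above. (One further small omission: not every $t$-th block is processed by $\cg$ at all---when $t\neq\n_{s'}$ for any $s'$ the block is padded with goats and the claim is vacuous; this case should be dispatched first.)
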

\begin{proof}
    If $t\neq\n_{s'}$ for any $s'$, then $\DB$ will be padded with goats (Section~\ref{sec:overall}, step~\ref{s:goats}), and the lemma holds vacuously. So we may assume $\DB$ is the $\n_{s'}$-th disordered block for some $s'$. Let $\F$ be the set of contestants at stage $t$. Note that $\g\in\F$ since $t\geq r$. At step~\ref{s:update} of the $\A$'s construction, we would update $\cars$ and $\goats$ to include restrictions on cars and goats in $\DB$, calling the recursive $\cg$ algorithm.\\
    
    First consider a recursion depth before $\g$ is removed from the contestant list. $\g$ might receive a car from opening a door $d\in X$. Before passing $\SB$, $\g$ would open more than $t$ many doors in the sub-block $\SB'<\SB$ after $d$, due to the minimality of $\SB$ (line~\ref{l:minimal-sb}) and thus denseness of $\g$ at $\SB'$. But we are forcing doors in $\SB'$ to contain goats (first union in line~\ref{l:sparse-goats}), which does not contradict the shallower recursion depths, since earlier recursions do not add cars to $\SB'$ (line~\ref{l:sparse-cars}). If $\g$ opens the door just before $\SB$, they might again receive a car. But we are also forcing the next $t$ many doors to contain goats (second union of line~\ref{l:sparse-goats}). Even if some of these doors go beyond $\DB$, we will pad those further disordered blocks with goats (section~\ref{sec:overall}, step~\ref{s:goats}) to be consistent with this strategy. Therefore, the effect of gaining one car is always sufficiently diluted with enough goats. After passing $\SB$, if $\g$ still opens doors within $\DB$, they would not receive any cars from the first union again, and from the fact that $X$ will not contain cars past $\SB$ due $X$'s minimality (line~\ref{l:minimal-X}). Within $\SB$ itself, we may assume the lemma holds from induction on recursion depth, since shallower recursions never add cars to the disordered blocks of deeper recursions (line~\ref{l:sparse-cars}).\\
    
    Now consider the case where we are at the recursion depth that will remove $\g$ from consideration (lines~\ref{l:B},\ref{l:recurse}). $\g$ must be sparse at the $(X,\SB)$ of this depth, so within $\SB$, after forcing the next $t$ many doors to contain goats (second union of line~\ref{l:sparse-goats} and Section~\ref{sec:overall} step~\ref{s:goats}), $\g$ will not open any doors in $\SB$ due to sparseness. Deeper recursions (line~\ref{l:recurse}) will only add cars to $\SB$, and will therefore not give $\g$ additional cars. Outside of $\SB$, the lemma holds for $\g$ for the same reasons that it holds in the earlier paragraph.\\
    
    At recursion depths after $\g$ is removed ($\g\in\B$ in line~\ref{l:recurse}), $\g$ will not visit the doors that are yet to be filled, so the lemma holds vacuously.\\

    Finally, consider the situation where $\g$ survives without removal to the terminating recursion depth (line~\ref{l:dense}). Then $\g$ must be dense within the $\DB'$ of this depth. Cars are added to $\DB'$ only at line~\ref{l:dense-cars}, to $\IF'\ll\DB'$, since lower depth recursions never add cars to $\DB'$ (line~\ref{l:sparse-goats}). Now $\DB'$ must contain sub-blocks $\SB''\subblock\DB'$, because the terminating depth cannot exceed the nestedness level of $\DB$, which was picked to exceed the number of contestants. Cars are never put into $\SB''$ (neither from lower depths at line~\ref{l:sparse-cars} nor from the terminating depth at line~\ref{l:dense-cars}). The denseness of $\g$'s outcome must then mean that every possible car gained from opening a door in $\IF'$ will be diluted with at least $t$ many goats in the sub-block after the car.
\end{proof}

Next, we show that given arbitrary $s\in\omega$:
\begin{lemma}
    $\G_s$ holds.
\end{lemma}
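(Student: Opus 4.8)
The plan is to reuse the $\G_s$ verification of the non-adaptive constructions (Sections~\ref{sec:verifyr} and~\ref{sec:verify}), adjusting only the count of goats that the adaptive restraints force. Fix $s$ and work inside the block $\DB=\DB^{\n_s}$ processed at stage~$s$, for which $|\F|=s+1$ and, as already invoked in the preceding lemma, the nestedness satisfies $\n_s>|\F|>s$. The call $\cg_s(\DB,\F,\cars,\goats)$ recurses down to a terminating block $\DB'$, and the only new cars of this stage are placed at line~\ref{l:dense-cars} in $\IF'-\goats$, where $\IF'\ll\DB'$ is the increasing part of $\DB'$. Because $\h$ traverses $\DB'$ by opening the doors of $\IF'$ first, and in increasing order, $\h$ collects a car on every one of the $|\IF'|$ consecutive time-stamps of $\dom(\IF')$ except those landing on a goat-forced door of $\IF'$; so the whole argument reduces to bounding that number of goat doors and then applying the \ref{eq:large-condition}.

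First I would bound the goats inside $\IF'$. Goats enter a block that we later recurse into only through the restraint term $\bigcup_{\g\in\F}\prin_{\g(\cars)-[0,\floor{\SB})}\restriction[0,s)$ of line~\ref{l:sparse-goats}: the complementary term $\DB-\SB-\cars$ at each level lies outside the sub-block we descend into, and the terminating dense step (line~\ref{l:dense-goats}) only fills doors already excluded from $\IF'$. At a recursion level with surviving contestant set $\F_i$ the restraint adds at most $s$ doors per contestant, i.e.\ at most $s\,|\F_i|$ doors; since line~\ref{l:recurse} discards at least one contestant per level, $|\F_i|\leq s+1-i$, and summing over the at most $s+1$ levels gives at most $s\sum_{j=1}^{s+1}j=\tfrac{s(s+1)(s+2)}{2}$ goat doors in $\IF'$. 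As $\n_s\geq s+2$, this is below the $\n_s^3$ goats that the \ref{eq:large-condition} for the components of $\DB^{\n_s}$ reserves.

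Finally I would run the arithmetic. With $m=\min(\dom(\IF'))$ the construction makes $|\IF'|=(m+\n_s^3)\n_s+1$, so, discarding the at most $\n_s^3$ goat doors, the number of cars $\h$ has collected by the time-stamp $T=m+|\IF'|=m(\n_s+1)+\n_s^4+1$ is at least $|\IF'|-\n_s^3$; hence the number of the first $T$ doors that are not cars is at most $T-(|\IF'|-\n_s^3)=m+\n_s^3$. The bound $\frac{m+\n_s^3}{T}\leq\frac{1}{s+1}$ is equivalent to $(s+1)(m+\n_s^3)\leq m(\n_s+1)+\n_s^4+1$, which holds because $\n_s>s+1$ makes the left side smaller by a term of order $\n_s^4$. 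Thus $\rho_T(\h^{-1}(\A))\geq 1-\tfrac{1}{s+1}$, and taking $n=\floor{T/(s+1)}$ (which exceeds $s$ since $T\geq\n_s^4$) yields $|[0,n(s+1))\cap\h^{-1}(\A)|\geq ns$, the inner clause of $\G_s$. The main obstacle is the goat count of the middle paragraph: unlike the non-adaptive case, where a single sparse opponent costs at most $s$ cars, every recursion level here re-imposes an $s$-door restraint for each surviving contestant, and I must check that these accumulated restraints still fit inside the $\n_s^3$ budget — which they do precisely because the processed block is chosen over-large, with $\n_s>s$.
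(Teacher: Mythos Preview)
Your proposal is correct and follows essentially the same approach as the paper: isolate the terminating block $\DB'$ reached by $\cg_s$, observe that only the restraint term $\bigcup_{\g\in\F}\prin_{\g(\cars)-[0,\floor{\SB})}\restriction[0,s)$ can force goats inside $\IF'\ll\DB'$, bound that number, and invoke the \ref{eq:large-condition}. The paper bounds the goat count more crudely as (contestants)$\times$(doors per contestant)$\times$(depth) $\leq s^3$ and then simply appeals to the largeness condition, whereas you sum $s|\F_i|$ over the decreasing contestant sets to get $\tfrac{s(s+1)(s+2)}{2}$ and then carry out the density arithmetic for $\G_s$ explicitly; both fit inside the $\n_s^3$ slack.
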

\begin{proof}
    We shall show that enough cars are put into $\DB=\DB^{\n_s}$ to satisfy the lemma. Cars are added into $\DB$ at stage $s$ of Section~\ref{sec:overall} during step~\ref{s:update}, when we run the recursive algorithm $\cg$ that puts cars into $\cars$, which will then be put into $\DB$ at step~\ref{s:A}. At the beginning of the stage, no doors in $\DB$ are forced to contain goats since we chose $\DB$ to be far away (step~\ref{s:n}).\\
    
    We shall show that prior to reaching the terminating recursion depth (Algorithm~\ref{alg:strategy}, line~\ref{l:terminate}), the number of doors forced to contain goats by the lower depths can be bounded (line~\ref{l:sparse-goats}). Then, just before terminating, the number of cars added (line~\ref{l:dense-cars}) will exceed the bound sufficiently to satisfy the lemma.\\
    
    Let $\DB'$ be the inner-level disordered block we are working within at the terminating depth. Notice that at lower recursion depths, goat restrictions that apply to $\DB'$ appear only at the second union of line~\ref{l:sparse-goats}, where we force the first $s$ many doors opened by each contestant to contain goats. Now there are no more than $s$ many contestants when we work within $\DB$, bounding the number of such doors by $s^2$. Also, since we remove at least one contestant before recursing (line~\ref{l:recurse}), the maximum depth of recursion is bounded by $s$. So no more than $s^3$ many doors within $\DB'$ will be forced to contain goats.\\
    
    When we add cars to $\DB'$ just prior to terminating, the number of cars added ($\IF-\goats$ in line~\ref{l:dense-cars}) will be plentiful from the \ref{eq:large-condition}, which can tolerate up to $\n_s^3\geq s^3$ many goats and still satisfy $G_{\n_s}$, and thus $G_s$.
\end{proof}

\subsection{Not Church Stochastic}
Note that this is not Church stochastic for the same reason that the counterexample constructed in Theorem \ref{thm:disorder-stronger-order} was not: the adaptive strategy which checks all doors in order, selecting the remainder of any level 0 sub-block in which it finds a car, will select a set of high density on $A$. (The reason this fails for weak stochasticity is that a weakly adaptive strategy must include every bit that it checks, thereby gaining no advantage from checking every bit in search of 1's.)
\section{Future work} \label{sec:future}

The main open question is the missing implication from Figure \ref{fig:zoo}, which is a formalization of whether adaptability can beat disorderliness:

\begin{openquestion}
    Is every MWC stochastic set also intrinsically dense?
\end{openquestion}

The natural strategy to prove this question is to prove that MWC stochastic sets are closed under computable permutations like random sets. However, Merkle \cite{merkle_2003} proved that the MWC stochastic sets are not closed under computable permutations. This does not completely resolve the question, as the image of the witnessing MWC stochastic set under the chosen computable permutation still has the same density, so this does not witness a failure of intrinsic density.
\\
\par
Using the techniques from \cite{thesis}, we can prove that the Turing degrees of sets which have computable density but not intrinsic density are closed upwards. Astor \cite{intrinsiccontent} proved that the Turing degrees of the intrinsically dense sets are exactly the High or DNC degrees, however it is unknown if the same can be said of these degrees.

\begin{openquestion}
    Does every High or DNC degree compute a set with computable density which is not intrinsically dense? Are the Turing degrees of the computably dense sets exactly the High or DNC degrees?
\end{openquestion}

Answering the second question positively would mean that orderly strategies are just as complex as disorderly ones in the degree-theoretic sense. When separating intrinsic density and computable density, as well as weak stochasticity and intrinsic density, we showed that the notions did not coincide for any $\density$. However, our separation of weak stochasticity and Church stochasticity only works for stochasticity $0$.

\begin{openquestion}
    Does weak stochasticity coincide with Church stochasticity for any $\density$?
\end{openquestion}

One attempt to answer this would be to find a notion of randomness which separates them. Schnorr randomness is an attractive candidate, however it is unknown if every Schnorr random is weakly stochastic.
\\
\par
If the answer to this question is yes, that would be the first known instance where the stochasticity zoo changes based on $\density$.
\\
\par
We can define \emph{weak KL stochasticity} to be the natural stochasticity notion which is disorderly and weakly adaptive: doors can be selected out of order, and previously seen information can be used to make decisions. However, the contestant must always take the object hidden behind each door they select. Then essentially the same proof as for Theorem \ref{thm:weak-non-church} proves that this notion is strictly weaker than KL stochasticity and does not imply MWC stochasticity. 

\begin{openquestion}
    Does weak Church (or weak MWC) stochasticity imply weak KL stochasticity? Does MWC stochasticity imply weak KL stochasticity?
\end{openquestion}

%

\bibliographystyle{plain}
\bibliography{references}

\end{document}